\documentclass[a4paper, 11pt]{amsart}
\usepackage[
  margin=1.9cm,
  includefoot,
  footskip=30pt,
]{geometry}
\sloppy
\usepackage{amsmath,amssymb,amscd}
\usepackage{amsfonts}
\usepackage[english]{babel}
\usepackage[leqno]{amsmath}
\usepackage{amssymb,amsthm}
\usepackage{mathrsfs} 
\usepackage{amscd}
\usepackage{enumerate}
\usepackage{epsfig}
\usepackage{relsize}
\usepackage{layout}
\usepackage[usenames,dvipsnames]{xcolor}
\usepackage[backref=page]{hyperref}
\usepackage{tikz}
\hypersetup{
 colorlinks,
 citecolor=Green,
 linkcolor=Red,
 urlcolor=Blue}
\usepackage[matrix,arrow,tips,curve]{xy}
\input{xy}
\xyoption{all}
\definecolor{darkgreen}{rgb}{0.0, 0.7, 0.0}
\definecolor{purple}{rgb}{0.5, 0.0, 0.5}
\definecolor{red}{rgb}{0.8, 0.2, 0.0}

\newtheorem{thm}{Theorem}[section]
\newtheorem{bthm}{Theorem}

\newtheorem{lemma}[thm]{Lemma}

\newtheorem{prop}[thm]{Proposition}

\newtheorem{claim}[thm]{Claim}

\numberwithin{equation}{section}
\setcounter{tocdepth}{1}
\theoremstyle{definition}
\newtheorem{defi}[thm]{Definition}

\theoremstyle{remark}
\newtheorem{remark}[thm]{Remark}

\newcommand{\Q}{\mathbb{Q}}

\def \Im{{\rm Im}}
\DeclareMathOperator{\Supp}{Supp}

\def \PP{\mathbb{P}}

\def \ZZ{\mathbb{Z}}

\def \P{\mathcal P}

\def \F{\mathcal F}

\def \L{\mathcal L}
\def \E{\mathcal E}
\def \G{\mathcal G}
\def \H{\mathcal H}
\def \U{\mathcal U}
\def\O{\mathcal O}

\def\M0{\mathcal M^0}

\DeclareMathOperator{\Proj}{{Proj}}
\DeclareMathOperator{\Sym}{{Sym}}

\def\B{\mathbf{B}}










\begin{document}

\title[On the classification of non-big Ulrich vector bundles on surfaces and threefolds]{On the classification of non-big Ulrich vector bundles on surfaces and threefolds}

\author[A.F. Lopez and R. Mu\~{n}oz]{Angelo Felice Lopez* and Roberto Mu\~{n}oz**}

\address{\hskip -.43cm Dipartimento di Matematica e Fisica, Universit\`a di Roma
Tre, Largo San Leonardo Murialdo 1, 00146, Roma, Italy. e-mail {\tt lopez@mat.uniroma3.it}}

\address{\hskip -.43cm Roberto Mu\~{n}oz, Departamento de Matem\'atica Aplicada, ESCET, Universidad Rey Juan Carlos, 28933 M\'ostoles (Madrid), Spain. e-mail {\tt roberto.munoz@urjc.es}} 

\thanks{* Research partially supported by  PRIN ``Advances in Moduli Theory and Birational Classification'' and GNSAGA-INdAM}

\thanks{** Research partially supported by Proyecto MTM2015-65968-P}

\thanks{{\it Mathematics Subject Classification} : Primary 14J60. Secondary 14J30.}

\begin{abstract} 
We classify Ulrich vector bundles that are not big on smooth complex surfaces and threefolds. 
\end{abstract}

\maketitle

\section{Introduction}

Let $X \subseteq \PP^N$ be a smooth complex projective variety. It is a well-known philosophy that the geometry of $X$ is reflected in the richness of its subvarieties. For this purpose a classical method is the use of vector bundles on $X$, the most useful being usually ample or at least big and globally generated vector bundles. The latter in turn can be produced by checking some cohomological vanishings via Castelnuovo-Mulmord regularity.

Strenghtening a little bit the condition one gets Ulrich vector bundles, that is vector bundles $\E$ such that $H^i(\E(-p))=0$ for all $i \ge 0$ and $1 \le p \le \dim X$.

The study of Ulrich vector bundles started in the 80's in commutative algebra \cite{u} and evolved recently in algebraic geometry through several pioneering works such as \cite{es, b1} (see also \cite{b2} and references therein). Nowadays many articles have appeared in the subject, focusing mainly in producing several examples - the basic existence question is still open - and, in some cases, even classifying them.

The first author started in \cite{lo} the investigation of positivity of Ulrich vector bundles. While in that paper the emphasis was on the positivity of the first Chern class, it appeared already clear that unless the variety or the vector bundle is somehow special, an Ulrich vector bundle should be positive enough. As a source of intuition, it was proved in \cite[Thm.~1]{lo} that an Ulrich vector bundle is big unless $X$ is covered by lines. Moreover other results (see \cite[Thm.~2, 3 and 4]{lo}) pointed in the direction of a possible classification of non-big Ulrich vector bundles.

It is the goal of the present research to give a classification of non-big Ulrich vector bundles on surfaces and threefolds.

In the case of surfaces, non-big Ulrich vector bundles turn out to be the same as the ones with $c_1(\E)^2=0$, as one can see from the ensuing

\begin{bthm}
\label{main1}

\hskip 3cm

Let $S  \subseteq \PP^N$ be a smooth irreducible complex surface. Let $\E$ be a rank $r$ Ulrich vector bundle on $S$. 

Then $\E$ is not big if and only if $(S,\O_S(1), \E)$ is one of the following:
\begin{itemize}
\item [(i)] $(\PP^2, \O_{\PP^2}(1), \O_{\PP^2}^{\oplus r})$;
\item [(ii)] $(\PP(\F), \O_{\PP(\F)}(1), \pi^*(\G(\det \F)))$, where $\F$ is a rank $2$ very ample vector bundle over a smooth curve $B$ and $\G$ is a rank $r$ vector bundle on $B$ such that $H^q(\G)=0$ for $q \ge 0$.
\end{itemize} 
\end{bthm}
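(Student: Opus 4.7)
The plan is to handle the two implications separately, verifying the examples in the ``if'' direction and doing the real work in the ``only if''. For case (i), $H^i(\O_{\PP^2}(-p))=0$ for every $i\ge 0$ and $p\in\{1,2\}$, and $c_1(\O_{\PP^2}^{\oplus r})^2=0$. For case (ii), I will use the projection formula together with the standard identities $\pi_*\O_S(-p)=0$ for $p=1,2$, $R^1\pi_*\O_S(-1)=0$, and $R^1\pi_*\O_S(-2)\cong\det\F^{-1}$; via the Leray spectral sequence, $H^i(\E(-1))=0$ for all $i$ is automatic, while $H^i(\E(-2))=0$ reduces to the hypothesis $H^q(B,\G)=0$. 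Since $c_1(\E)$ is pulled back from $B$, its square vanishes, and a Segre-class computation shows this forces $\E$ not to be big.

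For the converse, I would first invoke \cite[Thm.~1]{lo}: a non-big Ulrich bundle forces $S$ to be covered by lines, so $S$ is either $\PP^2$ with its standard embedding or a scroll $\pi:S=\PP(\F)\to B$ over a smooth curve $B$, with lines as the fibers of $\pi$. In the $\PP^2$ case, the general Ulrich identity $c_1(\E)\cdot H=\tfrac{r}{2}(K_S+3H)\cdot H$ yields $c_1(\E)=0$; Ulrich bundles satisfy Castelnuovo--Mumford $0$-regularity, so $\E$ is globally generated with $h^0(\E)=\chi(\E)=r$, and the evaluation morphism $\O_{\PP^2}^{\oplus r}\to\E$ is a surjection of rank-$r$ bundles with trivial determinant, hence an isomorphism.

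The scroll case is where the real work lies. The key intermediate step is to show that $\E$ descends to a bundle on $B$. Since $\E$ is globally generated, $\det\E$ is nef and effective, and non-bigness combined with the Ulrich Riemann--Roch identities forces $c_1(\E)^2=0$. The classification of nef square-zero classes on a ruled surface then forces $\det\E$ to be numerically a nonnegative multiple of a fiber class of some $\PP^1$-fibration on $S$, so (possibly after replacing $\pi$ by the other ruling in the exceptional case $S\cong\PP^1\times\PP^1$) we may assume $\det\E=\pi^*M$ for some $M\in\Pic(B)$. Consequently $\deg(\E|_L)=c_1(\E)\cdot L=0$ for every fiber $L\cong\PP^1$ of $\pi$, and global generation forces $\E|_L\cong\O_L^{\oplus r}$. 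Cohomology and base change then show that $\H:=\pi_*\E$ is locally free of rank $r$ on $B$ and that the counit $\pi^*\H\to\E$ is an isomorphism. I expect this descent step---and especially the identification of the correct scroll structure via the Chern-class analysis---to be the main obstacle.

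To conclude case (ii), set $\G:=\H\otimes\det\F^{-1}$. The projection formula and the identifications above give $R^0\pi_*\E(-2)=0$ and $R^1\pi_*\E(-2)\cong\G$, so the Ulrich vanishings $H^i(\E(-2))=0$ translate into $H^q(B,\G)=0$ for $q=0,1$; the vanishings $H^i(\E(-1))=0$ are automatic from $R^\bullet\pi_*\O_S(-1)=0$. This produces $\E\cong\pi^*(\G(\det\F))$, finishing the classification.
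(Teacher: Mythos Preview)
Your plan is sound and would go through, but it follows a genuinely different route from the paper's.

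The paper never first reduces $S$ to a scroll. Assuming $(S,\O_S(1))\ne(\PP^2,\O_{\PP^2}(1))$, it argues that if $c_1(\E)^2>0$ then $c_1(\E)$ is nef and big, $r\ge 2$, $h^0(\E)=rd\ge r+2$ (since $d\ge 2$), and $H^1(-\det\E)=0$ by Kawamata--Viehweg; these hypotheses feed directly into the bigness criterion of Bini--Flamini \cite[Thm.~3.1]{bf}, forcing $\E$ big, a contradiction. Having $c_1(\E)^2=0$, the paper then simply quotes \cite[Thm.~2]{lo} to land in case (ii). So the paper's proof is two black-box citations and a Kawamata--Viehweg vanishing.

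Your approach instead uses \cite[Thm.~1]{lo} to get $S$ covered by lines, reduces to $\PP^2$ or a scroll, and then in the scroll case reproves the descent statement of \cite[Thm.~2]{lo} by hand. This is more self-contained but longer. One caution on the step you flag as the main obstacle: your assertion that ``non-bigness combined with the Ulrich Riemann--Roch identities forces $c_1(\E)^2=0$'' is correct, but it is not a purely formal consequence of Riemann--Roch. Writing $c_1(\E)\equiv aH+bf$ on the scroll and using $s_2(\E^*)=c_1(\E)^2-c_2(\E)=0$ together with $\chi(\E(-1))=\chi(\E(-2))=0$, one computes $c_1(\E)^2=-a(2g-2+d)$; then nefness of $c_1(\E)$ (so $a\ge 0$ and $c_1(\E)^2\ge 0$) forces either $a=0$ or $g=0,\,d=2$, and in both cases $c_1(\E)^2=0$. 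So the argument needs the explicit numerics on the scroll together with the nef constraint, not just abstract Riemann--Roch. The paper bypasses this computation entirely via \cite{bf}; your route trades that external input for a concrete calculation.
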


On the other hand, on threefolds, as expected, the situation is more variegated, as there are examples with $c_1(\E)^3>0$. A beautiful classification result is given, in \cite{ahmpl}, on a rational normal scroll. In particular our classification on linear scrolls over a curve (see Theorem \ref{bei}) drew much inspiration from the above mentioned paper. 

Nevertheless we were able to classify them, as follows.

\begin{bthm}
\label{main2}

\hskip 3cm

Let $X \subseteq \PP^N$ be a smooth irreducible complex threefold. Let $\E$ be a rank $r$ vector bundle on $X$. 

Then $\E$ is Ulrich not big if and only if $(X, \O_X(1), \E)$ is one of the following. 

If $c_1(\E) = 0$:
\begin{itemize}
\item [(i)] $(\PP^3, \O_{\PP^3}(1), \O_{\PP^3}^{\oplus r})$; 
\end{itemize}

If $c_1(\E)^2=0, c_1(\E) \ne 0$:
\begin{itemize}
\item [(ii)] $(\PP(\F), \O_{\PP(\F)}(1), \pi^*(\G(\det \F)))$, where $\F$ is a rank $3$ very ample vector bundle over a smooth curve $B$, with $\G$ a rank $r$ vector bundle on $B$ such that $H^q(\G)=0$ for $q \ge 0$.
\end{itemize}
If $c_1(\E)^3 = 0, c_1(\E)^2 \ne 0$:
\begin{itemize}
\item [(iii)] $(\PP(\F), \O_{\PP(\F)}(1), \pi^*(\G(\det \F)))$, where $\F$ is a rank $2$ very ample vector bundle over a smooth surface $B$, with $\G$ a rank $r$ vector bundle on $B$ such that $H^q(\G) = H^q(\G \otimes \F^*)=0$ for $q \ge 0$.
\end{itemize}
If $c_1(\E)^3 > 0$:
\begin{itemize}
\item [(iv)] $(Q, \O_{Q}(1), \mathcal S)$, where $Q \subset \PP^4$ is a smooth quadric and $\mathcal S$ is the spinor bundle; 
\item[(v)] $(\PP(\F), \O_{\PP(\F)}(1), \E)$, where $\F$ is a rank $3$ very ample vector bundle over a smooth curve $B$, and $\E$ is an extension
$$0 \to \Omega_{X/B}(2H+\pi^*M) \to \E \to \pi^*(\G(\det \F)) \to 0$$
and $M$ is a line bundle, $\G$ is a rank $r-2$ vector bundle on $B$ such that $H^q(M)=H^q(\G)=0$ for every $q \ge0$.
\end{itemize}
\end{bthm}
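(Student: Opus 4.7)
The plan is to stratify by the numerical dimension $\nu = \nu(c_1(\E)) \in \{0,1,2,3\}$. Any Ulrich bundle is globally generated, so $c_1(\E)$ is nef and hence $c_1(\E)^k \ge 0$ for $0 \le k \le 3$; moreover, by \cite[Thm.~1]{lo}, $X$ must be covered by lines, a fact that constrains the ambient geometry throughout. Cases (i), (ii), (iii) of the theorem correspond respectively to $\nu = 0, 1, 2$, while $\nu = 3$ splits into the two sub-cases (iv) and (v).

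For the three cases $\nu \le 2$ (items (i)--(iii)), I would invoke Theorems 2, 3 and 4 of \cite{lo}, which already produce the scroll structure of $X$ and a first description of $\E$. The precise pullback form $\E \cong \pi^*(\G(\det \F))$ would then be recovered by restricting $\E$ to a fiber of the scroll projection $\pi : X \to B$: an Ulrich bundle on a projective space is trivial up to the canonical twist, so cohomology and base change yields the pullback description. The vanishings $H^q(\G) = 0$, and $H^q(\G \otimes \F^*) = 0$ in (iii), would come from the Ulrich condition on $\E(-p)$ via the relative Euler sequence and the Leray spectral sequence.

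The substantial case is $\nu = 3$ with $\E$ non-big, equivalent to the vanishing of the top Segre class $s_3(\E) = c_1(\E)^3 - 2 c_1(\E) c_2(\E) + c_3(\E) = 0$. First I would narrow the ambient variety: combining the covering by lines with the Ulrich constraints, $X$ should reduce to either a smooth quadric in $\PP^4$---yielding case (iv) via a rigidity argument identifying $\E$ with the spinor bundle, once Ulrichness together with $s_3 = 0$ forces $\E|_\ell \cong \O_\ell^{\oplus r}$ on a covering line $\ell$ and pins down its cohomology class---or a $\PP^2$-scroll over a smooth curve; the case $\PP^3$ is ruled out because its Ulrich bundles with $c_1^3 > 0$ are automatically big.

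The main obstacle is case (v). To produce the extension
\[
0 \to \Omega_{X/B}(2H + \pi^*M) \to \E \to \pi^*(\G(\det \F)) \to 0
\]
intrinsically, my plan, inspired by \cite{ahmpl}, is to analyse $\pi_* \E$ together with the restriction of $\E$ to a general fiber $\PP^2$: there $\E|_{\PP^2}$ is Ulrich of rank $r$, and the Segre vanishing rigidifies its splitting type so that it contains a copy of $\Omega_{\PP^2}(2)$. Globalising this rank-$2$ piece via $\Omega_{X/B}$, twisted by the hyperplane class $2H$ and a pullback $\pi^*M$ chosen to normalise the determinant, should produce the desired subextension; the quotient then falls into case (iii) applied to a rank-$(r-2)$ bundle, and the required vanishings on $\G$ and $M$ follow from the Ulrich conditions via the long exact sequence of the extension.
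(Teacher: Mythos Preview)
There is one error and two genuine gaps. The error is in case (v): you assert that $\E_{|\PP^2}$ is Ulrich on a fiber $F \cong \PP^2$, but this is false. If it were, then $\E_{|F} \cong \O_{\PP^2}^{\oplus r}$ by \cite{es}, hence $c_1(\E)_{|F} = 0$, forcing $\det\E \equiv bF$ and $c_1(\E)^2 = 0$, contradicting $c_1(\E)^3 > 0$. In fact the target is $\E_{|F} \cong \O_{\PP^2}^{\oplus r-2} \oplus \Omega_{\PP^2}(2)$, and the paper reaches this only after establishing the key numerical constraint that $\det\E \equiv \xi + bF$ (coefficient of $\xi$ equal to $1$). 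That in turn is obtained by passing to the degeneracy surface $W = D_{r-1}(\phi)$ of a general $\phi\colon \O_X^{\oplus r} \to \E$, showing the cokernel $\L$ is an Ulrich line bundle on $W$ with $\L^2 = 0$ (here non-bigness of $\E$ is used), and applying the surface classification to $(W,H_{|W})$. More seriously, ``globalising the $\Omega_{\PP^2}(2)$ piece'' is exactly the crux, and a fibrewise splitting does not by itself produce a global subbundle $\Omega_{X/B}(2H + \pi^*M) \hookrightarrow \E$. The paper does this via a relative Beilinson spectral sequence on the scroll \cite{ab}, applied to $\mathcal{M} = \E(-\xi - \pi^*M)$: after a chain of vanishings one shows $E_1^{-1,1} \cong \Omega_{X/B}(\xi)$ survives to $E_\infty$, yielding the inclusion. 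The twist $M$ is not a free normalisation of the determinant but is dictated by the Ulrich line bundle $\L$ on $W$.

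For $\nu = 2$ (case (iii)) your plan to simply invoke \cite{lo} is incomplete; those results point toward, but do not give, the classification. The paper instead first applies the Lanteri--Palleschi list \cite{lp} of threefolds covered by lines (once $d \ge 3$ and $c_1(\E)^2 \ne 0$), obtaining four possibilities: linear $\PP^2$-bundle over a curve, linear $\PP^1$-bundle over a surface, Del Pezzo threefold, and quadric fibration over a curve. The last two must be \emph{excluded}: Del Pezzo with $d \in \{3,4,5\}$ has $\rho(X)=1$ and is ruled out by \cite[Lemma~3.2]{lo}; for quadric fibrations one shows all fibres are irreducible and then that $\Phi_{\E}$ cannot have a $\PP^2$-fibre, which would otherwise force a linear $\PP^1$-bundle structure. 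For the genuine $\PP^1$-bundle over a surface, the paper needs the new Proposition~\ref{surf}: when $\E$ is not already a pullback along the given projection, it manufactures a \emph{second} linear $\PP^1$-bundle structure $p\colon X \to B$ (via the fibres of $\Phi_{\E}$ and \cite{bs}, or in the $\PP(T_{\PP^2})$ case via \cite{mos}) along which $\E$ \emph{is} a pullback. Your sketch addresses neither the exclusion of the two extraneous types nor the construction of this second fibration.
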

As a matter of fact it is not difficult to see that all cases in the above theorem actually occur. See Remark \ref{cisono} for (iii) and Lemma \ref{exa} for (v).

Throughout the whole paper we work over the complex numbers.

We would like to thank G. Casnati and J.C. Sierra for many useful conversations.

\section{Notation and general facts on vector bundles}

\begin{defi}
Let $X$ be a smooth irreducible variety of dimension $n \ge 1$ and let $\E$ be a globally generated rank $r$ vector bundle on $X$. We let
$$\Phi_{\E} : X \to {\mathbb G}(r-1,\PP H^0(\E))$$
be the morphism to the Grassmannian. We set
$$\nu(\E) =  \nu(\O_{\PP(\E)}(1)) = \max\{k \ge 0 : \O_{\PP(\E)}(1)^k \ne 0\}$$
to be the numerical dimension of $\E$. We say that $\E$ is big if $\O_{\PP(\E)}(1)$ is big, that is if $\nu(\E) = r+n-1$.
\end{defi}

As we are interested in studying (non) big vector bundles, we will now collect some useful results.

\begin{remark}
\label{seg}
Let $X$ be a smooth projective variety of dimension $n$ and let $\E$ be a nef vector bundle. Then $s_n(\E^*) \ge 0$ and $\E$ is big if and only if $s_n(\E^*)>0$.
Moreover if $c_1(\E)^n=0$ then $\E$ is not big.
\end{remark}
\begin{proof} 
Let $\xi$ be the tautological line bundle on $\PP(\E) = \Proj(\Sym(\E))$ and let $\pi: \PP(\E) \to X$ be the projection. Then (see \cite[Def.~10.1]{eh}, where they use $\Proj(\Sym(\E^*))$), we have, setting $[P]$ for the class of a point $P \in \PP(\E)$ in the Chow group $A^{n+r-1}(\PP(\E)) \cong \ZZ$,
$$s_n(\E^*) = \pi_*([\xi^{n+r-1}]) = \pi_*(\xi^{n+r-1}[P]) = \xi^{n+r-1} \pi_*([P]) = \xi^{n+r-1}[\pi(P)]$$
that is, identifying with $\ZZ$ and observing that $\xi$ is nef, 
$$s_n(\E^*) = \xi^{n+r-1} \ge 0.$$
Moreover $\xi$ is big if and only if $\xi^{n+r-1} > 0$, that is if and only if $s_n(\E^*)>0$.

Now if $c_1(\E)^n=0$, it follows by \cite[Cor.~2.7]{dps} that $s_n(\E^*)=0$, therefore $\E$ is not big.
\end{proof} 

\begin{defi}
Let $X$ be a smooth irreducible variety and let $L$ be a line bundle on $X$. The augmented base locus of $L$ is
$$\B_+(L) = \bigcap\limits_{L=A+E} \Supp(E)$$
where the intersection is taken over all decompositions $L=A+E$, where $A$ is an ample $\Q$-divisor and $E$ is an effective $\Q$-divisor on $X$.
\end{defi}

The following result will be useful to prove non-bigness.

\begin{lemma}
\label{nonbig}
Let $X$ be a smooth irreducible variety and let $\E$ be a vector bundle on $X$. Let $\{Y_t\}_{t \in T}$ be a covering family of subvarieties of $X$. If $\E_{|Y_t}$ is not big for all $t \in T$, then $\E$ is not big.
\end{lemma}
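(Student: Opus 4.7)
The plan is to reduce to the analogous statement for line bundles by passing to the projectivization. Set $\xi = \O_{\PP(\E)}(1)$ and let $\pi\colon\PP(\E)\to X$ be the projection. By definition, $\E$ is big if and only if $\xi$ is big. For each $t\in T$, the preimage $\wt Y_t := \pi^{-1}(Y_t) \cong \PP(\E_{|Y_t})$ is an irreducible subvariety of $\PP(\E)$ (since $Y_t$ is irreducible and $\wt Y_t\to Y_t$ is a projective bundle), and since $\bigcup_t Y_t = X$ and $\pi$ is surjective, the family $\{\wt Y_t\}_{t\in T}$ is a covering family of subvarieties of $\PP(\E)$. Moreover
$$\xi_{|\wt Y_t} \cong \O_{\PP(\E_{|Y_t})}(1),$$
which is not big by hypothesis. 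Hence it suffices to prove the following line bundle version of the statement: if $L$ is a line bundle on a projective variety $Z$ admitting a covering family $\{Z_t\}_{t\in T}$ of subvarieties with $L_{|Z_t}$ not big for every $t$, then $L$ is not big.

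I would prove this by contradiction. Assume $L$ is big. Kodaira's lemma then produces an integer $m\ge 1$, an ample Cartier divisor $A$ and an effective Cartier divisor $E$ on $Z$ with $mL\sim A+E$. Since $\Supp(E)$ is a proper closed subset of $Z$ and $\bigcup_{t\in T} Z_t = Z$, there exists $t_0\in T$ with $Z_{t_0}\not\subset\Supp(E)$. The restriction $E_{|Z_{t_0}}$ is therefore a well-defined effective Cartier divisor on $Z_{t_0}$, and we get
$$mL_{|Z_{t_0}} \sim A_{|Z_{t_0}} + E_{|Z_{t_0}}$$
with $A_{|Z_{t_0}}$ ample and $E_{|Z_{t_0}}$ effective. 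Applying Kodaira's lemma in the reverse direction, $L_{|Z_{t_0}}$ is big, contradicting the assumption and concluding the proof.

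No step is really an obstacle: the argument is an immediate application of Kodaira's lemma together with the observation that a covering family cannot be entirely contained in a proper closed subset. The only item requiring a moment of care is making sure the pullback family $\{\pi^{-1}(Y_t)\}$ is still a covering family by irreducible subvarieties of $\PP(\E)$, which is automatic as noted above.
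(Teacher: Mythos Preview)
Your proof is correct and follows essentially the same approach as the paper: both pass to $\PP(\E)$ and argue by contradiction that bigness of $\xi=\O_{\PP(\E)}(1)$ yields a proper closed ``bad locus'' which some member of the lifted covering family must avoid, forcing bigness of the restriction. The only cosmetic difference is that the paper phrases this via the augmented base locus $\B_+(\xi)$ (using that a subvariety not contained in $\B_+(\xi)$ has big restriction), whereas you invoke a single Kodaira decomposition $m\xi\sim A+E$ and use $\Supp(E)$ directly; since $\B_+(\xi)$ is by definition the intersection of such supports, the two arguments are equivalent.
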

\begin{proof}
Assume that $\E$ is big, so that $\B_+(\O_{\PP(\E)}(1)) \subsetneq \PP(\E)$. Let $\pi : \PP(\E) \to X$ be the projection and choose $z \in \PP(\E) \setminus \B_+(\O_{\PP(\E)}(1))$. Let $t \in T$ be such that $\pi(z) \in Y_t$. Then $z \in \pi^{-1}(Y_t) = \PP(\E_{|Y_t})$ and therefore $\PP(\E_{|Y_t})  \not\subseteq \B_+(\O_{\PP(\E)}(1))$. Hence $\O_{\PP(\E_{|Y_t})}(1) = \O_{\PP(\E)}(1)_{| \PP(\E_{|Y_t})}$ is big, that is $\E_{|Y_t}$ is big, a contradiction.
\end{proof} 

The following is the definition of Ulrich vector bundle.

\begin{defi}
Let $X \subseteq \PP^N$ be a smooth irreducible variety of dimension $n \ge 1$ and let $\E$ be a vector bundle on $X$. We say that $\E$ is an Ulrich vector bundle if $H^i(\E(-p))=0$ for all $i \ge 0$ and $1 \le p \le n$. We will sometimes say that $\E$ is an Ulrich vector bundle for $(X,H)$, where $H$ is an hyperplane section of $X \subseteq \PP^N$.
\end{defi}

Now a couple of remarks on Ulrich vector bundles. The first one will be often used without mentioning.

\begin{remark}
\label{ulr}
Let $X \subseteq \PP^N$ be a smooth irreducible variety of degree $d$ and let $\E$ be a rank $r$ Ulrich vector bundle on $X$. Then $\E$ is $0$-regular, hence it is globally generated by \cite[Thm.~1.8.5]{laz1}. Moreover $\E$ is ACM and $h^0(\E)=rd$ by \cite[(3.1)]{b2}.
\end{remark}
 
\begin{remark}
\label{spi}
Let $Q \subset \PP^4$ be a smooth quadric and let $\E$ be a rank $r$ Ulrich vector bundle on $Q$. Then $r$ is even and $\E \cong \mathcal S^{\oplus (\frac{r}{2})}$, where $\mathcal S$ is the spinor bundle. In particular $c_1(\E)^2=\frac{r^2}{2}L \ne 0$, where $L$ is a line in $Q$ and $s_3(\E^*)= \frac{r(r-2)(r+2)}{24}$. Hence $\E$ is big if and only if $r \ge 4$.
\end{remark}
\begin{proof} 
As is well-known (see \cite[Rmk.~2.5(4)]{bgs}, \cite[Rmk.~2.6]{b2}, \cite[Exa.~3.2]{ahmpl}), $\mathcal S$ has rank $2$ and is the only indecomposable Ulrich vector bundle on $Q \subset \PP^4$. Since any direct summand of $\E$ is an Ulrich vector bundle, it follows that $\E \cong \mathcal S^{\oplus (\frac{r}{2})}$. Now \cite[Rmk.~2.9]{ot} gives that $c_1(\mathcal S)=H$ and $c_2(\mathcal S)=L$, where $H$ is a hyperplane section of $Q$. It follows that $c_1(\E)^2=\frac{r^2}{2}L \ne 0$ and $s_3(\E^*)= \frac{r(r-2)(r+2)}{24}$. Hence Remark \ref{seg} gives that $\E$ is big if and only if $r \ge 4$.
\end{proof} 

\section{Non-big Ulrich vector bundles on surfaces}

In this section we classify non-big Ulrich vector bundles on surfaces.

\begin{proof}[Proof of Theorem \ref{main1}]

\hskip 3cm

If $(S, \O_S(1), \E)$ is as in (i) or (ii), it follows by  \cite[Prop.~2.1]{es} (or \cite[Thm.~2.3]{b2}) in case (i) and \cite[Lemma 4.1]{lo} in case (ii), that $\E$ is a rank $r$ Ulrich vector bundle on $S \subseteq \PP^N$ with $c_1(\E)^2=0$. Also $\E$ is not big by Remark \ref{seg}.

Vice versa let $\E$ be a rank $r$ Ulrich vector bundle on $S \subseteq \PP^N$ such that $\E$ is not big. 

If $(S, \O_S(1)) = (\PP^2,\O_{\PP^2}(1))$ it follows by \cite[Prop.~2.1]{es} (or \cite[Thm.~2.3]{b2}) that $\E = \O_{\PP^2}^{\oplus r}$, so that we are in case (i).

Assume from now on that $(S, \O_S(1)) \ne (\PP^2,\O_{\PP^2}(1))$.

We claim that $c_1(\E)^2=0$. 

Assume to the contrary that $c_1(\E)^2 \ne 0$. As $\E$ is globally generated, it follows that $c_1(\E)$ is nef and big, whence, in particular, $r \ge 2$. Let $d$ be the degree of $S$, so that $d \ge 2$. We have $h^0(\E) = rd \ge r+2$. Also observe that $H^1(-\det \E)=0$ by Kawamata-Viehweg's vanishing theorem. Now \cite[Thm.~3.1]{bf} implies that $\E$ is big, a contradiction.

This proves that $c_1(\E)^2=0$ and then \cite[Thm.~2]{lo} gives that $(S, \O_S(1), \E)$ is as in (ii).
\end{proof} 

\section{Non-big Ulrich vector bundles on $\PP^2$-bundles over a curve}

One basic source of non-big Ulrich vector bundles is on $\PP^{n-1}$-bundles over a curve. While it is easy, via pull-back, to find examples with $c_1(\E)^n=0$, the case $c_1(\E)^n>0$ is more delicate. Nevertheless, we will see below that, at least on $\PP^2$-bundles over a curve, the latter can be characterized as suitable extensions.

\begin{lemma}
\label{exa}
Let $B$ be a smooth irreducible curve and let $\F$ be a very ample rank $n \ge 2$ vector bundle on $B$. Let $X = \PP(\F)$ with tautological line bundle $\xi$ and projection $\pi: X \to B$. Let $M$ be a line bundle on $B$ such that $H^i(M)=0$ for every $i \ge0$. Set $H=\xi$. Then $\E:= \Omega_{X/B}(2H+\pi^*M)$ is an Ulrich vector bundle for $(X, H)$, $\E$ it is not big and, if $n \ge 3$, we also have that $c_1(\E)^n>0$. 
\end{lemma}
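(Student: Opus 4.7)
The plan is to verify, in turn, the three assertions of the lemma (Ulrich, not big, and $c_1(\E)^n>0$ when $n\ge 3$). The main tool throughout is the relative Euler sequence
$$0 \to \Omega_{X/B}(H) \to \pi^*\F \to \O_X(H) \to 0 \quad (\star)$$
(where $H=\xi$ and $\pi_*\O_X(H)=\F$), which simultaneously controls the Chern classes of $\Omega_{X/B}$ and the cohomology of all twists of $\E=\Omega_{X/B}(2H+\pi^*M)$. A pleasant feature of $(\star)$ is that applying $\pi_*$ produces the identity map $\F\to\F$, so $R^i\pi_*\Omega_{X/B}(H)=0$ for every $i\ge 0$.

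For the Ulrich property I would tensor $(\star)$ by $\O_X((1-p)H+\pi^*M)$ to obtain, for each $1\le p\le n$,
$$0 \to \E(-pH) \to \pi^*(\F\otimes M)\otimes \O_X((1-p)H) \to \O_X((2-p)H+\pi^*M) \to 0,$$
and then analyse the associated long exact sequence via the projection formula, using the fiberwise vanishing $H^j(\PP^{n-1},\O(k))=0$ for $-(n-1)\le k\le -1$. For $p\ge 3$ both the middle and the right term have zero cohomology. For $p=2$ the middle is again killed by the fiberwise vanishing and the right reduces to $H^i(B,M)=0$ by hypothesis. For $p=1$ middle and right both yield $H^i(B,\F\otimes M)$, and the connecting map between them, being induced by the tautological evaluation $\pi^*\F\to\O_X(H)$, is the identity on $\F$ after pushforward; the long exact sequence therefore collapses and $H^i(\E(-H))=0$ in every degree.

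For non-bigness I would apply Lemma \ref{nonbig} to the covering family of fibers of $\pi$. Each fiber is a $\PP^{n-1}$ and $\E$ restricts there to $\Omega_{\PP^{n-1}}(2)$, so it suffices to show that this bundle is not big. Using $0\to \Omega_{\PP^{n-1}}(2)\to \O(1)^{\oplus n}\to \O(2)\to 0$ one gets $c(\Omega_{\PP^{n-1}}(2))=(1+H)^n/(1+2H)$ and hence $s(\Omega_{\PP^{n-1}}(2))=(1+2H)/(1+H)^n$; extracting the coefficient of $H^{n-1}$ and using the identity $\binom{2n-2}{n-1}=2\binom{2n-3}{n-2}$ gives $s_{n-1}(\Omega_{\PP^{n-1}}(2)^*)=0$, whence Remark \ref{seg} applies.

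Finally, for the positivity $c_1(\E)^n>0$ when $n\ge 3$, the Euler sequence gives $c_1(\Omega_{X/B})=\pi^*c_1(\F)-nH$, hence $c_1(\E)=(n-2)H+\pi^*N$ with $N:=c_1(\F)+(n-1)M$. Since $(\pi^*N)^2=0$ on the curve base, expansion yields
$$c_1(\E)^n=(n-2)^{n-1}(n-1)\bigl(2\deg\F+n\deg M\bigr);$$
the hypothesis $H^i(M)=0$ forces $\deg M=g-1$ by Riemann--Roch, and very ampleness of $\F$ makes the parenthesis positive in every case (for $g=0$ one uses $\deg\F\ge n$, while for $g\ge 1$ both $\deg\F>0$ and $g-1\ge 0$). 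The main obstacle is really the non-bigness step: because $c_1(\E)^n>0$ for $n\ge 3$, the clean criterion from Remark \ref{seg} does not apply to $\E$ itself, and one is forced to pass to fibers and carry out the Segre computation there.
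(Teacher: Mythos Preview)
Your proof is correct and follows essentially the same architecture as the paper's: the twisted Euler sequence for the Ulrich vanishings, restriction to fibers plus Lemma~\ref{nonbig} for non-bigness, and the same Chern-class expansion for $c_1(\E)^n$. Your treatment of the $p=1$ case (arguing that the pushforward of $\pi^*\F\to\O_X(H)$ is the identity, hence the long exact sequence collapses) is exactly the content of the vanishing $R^j\pi_*\Omega_{X/B}(\xi)=0$ that the paper quotes from \cite[Lemma~7.3.11]{laz2}; you simply reprove it on the spot.

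The one genuine divergence is how you show $\Omega_{\PP^{n-1}}(2)$ is not big on a fiber. The paper argues by contradiction via Griffiths' vanishing: if $\Omega_{\PP^{n-1}}(2)$ were big (and globally generated), then $h^1(\omega\otimes\Omega(2)\otimes\det(\Omega(2)))=h^1(\Omega_{\PP^{n-1}})$ would vanish, contradicting $h^1(\Omega_{\PP^{n-1}})=1$. You instead compute the top Segre class directly from the Euler sequence and the binomial identity $\binom{2n-2}{n-1}=2\binom{2n-3}{n-2}$, then invoke Remark~\ref{seg}. Your route is more elementary and self-contained (no appeal to a vanishing theorem), at the cost of a small combinatorial computation; the paper's route is slicker and conceptually explains \emph{why} the bundle fails to be big. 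One cosmetic point: you write $s(\Omega_{\PP^{n-1}}(2))=(1+2H)/(1+H)^n$ and then conclude about $s_{n-1}(\Omega_{\PP^{n-1}}(2)^*)$; the sign discrepancy between $s_k(\E)$ and $s_k(\E^*)$ is immaterial here since the answer is $0$, but it would be cleaner to match the convention of Remark~\ref{seg} throughout.
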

\begin{proof} 
Let $p$ be an integer such that $1 \le p \le n$. Then the twisted relative Euler sequence is
\begin{equation}
\label{eul}
0 \to \E(-pH) \to \pi^*(\F(M))((1-p)\xi) \to (2-p)\xi+\pi^*M \to 0.
\end{equation}
By \cite[Ex.~III.8.4]{ha} we see that
$$R^j \pi_*(\pi^*(\F(M))((1-p)\xi)) = 0 \ \mbox{if} \ j > 0 \ \hbox{or} \ j=0 \ \hbox{and} \ 2 \le p \le n$$
and
$$R^j \pi_*((2-p)\xi+\pi^*M) =  \begin{cases} 0 & {\rm if} \ j > 0 \ \hbox{or} \ j=0 \ \hbox{and} \ 3 \le p \le n \\ 
M & {\rm if} \  j=0 \ \hbox{and} \ p=2 \end{cases}.$$
Now the Leray spectral sequence gives that 
\begin{equation}
\label{leray1}
H^i(X, \pi^*(\F(M))((1-p)\xi)) = 0 \ \hbox {if} \ i \ge 2 \ \hbox{or} \ 0 \le i \le 1 \ \hbox{and} \ 2 \le p \le n
\end{equation}
and similarly
\begin{equation}
\label{leray2}
H^i(X, (2-p)\xi+\pi^*M) = \begin{cases} 0 & {\rm if} \ i \ge 2 \ \hbox{or} \ 0 \le i \le 1 \ \hbox{and} \ 3 \le p \le n \\ H^i(B, M)=0 & {\rm if} \  0 \le i \le 1 \ \hbox{and} \ p=2 \end{cases}.
\end{equation}
From the cohomology sequence of \eqref{eul}, using $\eqref{leray1}$ and $\eqref{leray2}$, we get that
$$H^i(\E(-pH))=0 \ \hbox{for} \ i \ge 0 \ \hbox{and} \ 2 \le p \le n.$$
Also, for $p=1$, we have that
$$\E(-H) = \Omega_{X/B}(\xi) \otimes \pi^*M$$
and it is a well-known fact (see \cite[Lemma 7.3.11(i) and (iii)]{laz2}) that
$$R^j \pi_*(\Omega_{X/B}(\xi))=0 \ \hbox{for} \ j \ge 0$$ 
whence the Leray spectral sequence gives that $H^i(X, \E(-H))=0$ for $i \ge 0$.
Thus $\E$ is Ulrich for $(X,H)$. 

Observe now that, on any fiber $F \cong \PP^{n-1}$ of $\pi$, we have that
$\E_{|F} \cong \Omega_{\PP^{n-1}}(2)$ is not big. In fact $\Omega_{\PP^{n-1}}(2)$ is $0$-regular, hence globally generated. If it were big, Griffiths' vanishing theorem \cite[Ex.~7.3.3]{laz2} would give the contradiction
$$0 = h^1(\omega_{\PP^{n-1}} \otimes \Omega_{\PP^{n-1}}(2) \otimes \det(\Omega_{\PP^{n-1}}(2))=h^1(\Omega_{\PP^{n-1}})=1.$$
Therefore Lemma \ref{nonbig} gives that $\E$ is not big. Finally set $g = g(B)$ and $d = \deg(\F)$. Then $\deg(M)=g-1$ and $H^n = d$. Now
$$c_1(\E) = c_1(\Omega_{X/B}) + 2(n-1)H + (n-1)\pi^*M \equiv (n-2)H + (d+(n-1)(g-1))F$$
we deduce that
$$c_1(\E)^n = (n-2)^{n-1}(n-1)(2d+n(g-1))>0$$
when $n \ge 3$.
\end{proof}

We now classify non-big Ulrich vector bundles in the case of $\PP^2$-bundles over a curve.

\begin{thm}
\label{bei}
Let $B$ be a smooth irreducible curve and let $\F$ be a very ample rank $3$ vector bundle on $B$. Let $X = \PP(\F)$ with tautological line bundle $\xi$ and projection $\pi: X \to B$. Set $H=\xi$. 

Let $\E$ be a rank $r$ vector bundle on $X$.

Then $\E$ is Ulrich for $(X,H)$ and not big if and only if $\E$ is as follows:

\begin{itemize}
\item[(i)] $\pi^*(\G(\det \F))$, with  $\G$ a rank $r$ vector bundle on $B$ such that $H^q(\G)=0$ for $q \ge 0$, when $c_1(\E)^2=0$, or 
\item[(ii)] $B = \PP^1, X = \PP^1 \times \PP^2, H = \O_{\PP^1}(1) \boxtimes \O_{\PP^2}(1)$ and $\E = \pi_2^*(\O_{\PP^2}(1))^{\oplus r}$, when $c_1(\E)^2 \ne 0, c_1(\E)^3 = 0$, or
\item[(iii)] an extension
\begin{equation}
\label{ext}
0 \to \Omega_{X/B}(2H+\pi^*M) \to \E \to \pi^*(\G(\det \F)) \to 0
\end{equation}
where $M$ is a line bundle and $\G$ is a rank $r-2$ vector bundle on $B$ such that $H^q(M)=H^q(\G)=0$ for every $q \ge0$, when $c_1(\E)^3 > 0$.
\end{itemize}
\end{thm}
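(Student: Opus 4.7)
For the \emph{if} direction, each case is checked directly. In (i), $\pi^*(\G(\det\F))$ is Ulrich by \cite[Lem.~4.1]{lo}, and $c_1(\E)\in\pi^*\Pic(B)$ gives $c_1(\E)^3=0$, so Remark~\ref{seg} yields non-bigness. In (ii), a K\"unneth computation on $\PP^1\times\PP^2$ verifies $H^i(\E(-pH))=0$ for $p=1,2,3$, and $c_1(\E)^3 = 0$ gives non-bigness. In (iii), Lemma~\ref{exa} gives that $\Omega_{X/B}(2H+\pi^*M)$ is Ulrich, \cite[Lem.~4.1]{lo} gives that $\pi^*(\G(\det\F))$ is Ulrich, and Ulrich bundles are closed under extensions. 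For non-bigness in (iii): on each fiber $F\cong\PP^2$ the extension splits (since $H^1(\Omega_{\PP^2}(2))=0$), giving $\E|_F\cong\Omega_{\PP^2}(2)\oplus\O^{\oplus(r-2)}$ with total Chern class $1+H+H^2$, so $s_2(\E|_F^*)=0$; hence $\E|_F$ is not big by Remark~\ref{seg}, and Lemma~\ref{nonbig} concludes.

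For the \emph{only if} direction, assume $\E$ is rank $r$ Ulrich and not big on $X$. I split by $c_1(\E)^k$. When $c_1(\E)^2 = 0$, the threefold analog of \cite[Thm.~2]{lo} gives $\E\cong\pi^*(\G(\det\F))$, and $H^q(\G)=0$ follows from Ulrich-ness, yielding (i). When $c_1(\E)^2\ne 0$ and $c_1(\E)^3 = 0$: writing $c_1(\E)\equiv aH+\pi^*D$ numerically, the hypotheses force $a>0$ and $a\deg\F+3\deg D = 0$. On each fiber $F$, $(c_1(\E|_F))^2 = a^2 > 0$; combining global generation with the Ulrich-induced constraints on $\pi_*\E(-pH)$ via Leray on $B$ forces $\E|_F\cong\O_{\PP^2}(1)^{\oplus r}$, so $a = r$. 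This yields a second projection $X\to\PP^2$, exhibiting $X\cong B\times\PP^2$ with $\F = L^{\oplus 3}$; K\"unneth on $B\times\PP^2$ then forces $H^0(B, L^{-1}) = H^1(B, L^{-1}) = 0$, and very ampleness of $L$ gives $B = \PP^1$, $L = \O(1)$, yielding (ii).

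The case $c_1(\E)^3 > 0$ is the heart of the theorem. The Ulrich vanishings together with Leray on the curve $B$ give $H^s(B, R^t\pi_*\E(-pH)) = 0$ for $s = 0, 1$, all $t$, and $p = 1, 2, 3$. Combining these vanishings, base change, Riemann--Roch on $X$ (to pin down the Hilbert polynomial of $\E$), and the classification of globally generated bundles on $\PP^2$, I would identify $\E|_F\cong\Omega_{\PP^2}(2)\oplus\O^{\oplus(r-2)}$ on every fiber. Since $\dim\Hom(\Omega_{\PP^2}(2),\E|_F) = 1$, cohomology and base change show $\pi_*\SHom(\Omega_{X/B}(2H), \E)$ is a line bundle on $B$, and the tautological evaluation yields an injection $\Omega_{X/B}(2H + \pi^*M)\hookrightarrow\E$ for a line bundle $M$ on $B$. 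The cokernel has $c_1^2 = 0$, is Ulrich of rank $r-2$, and by case (i) equals $\pi^*(\G(\det\F))$; the vanishings $H^q(M) = H^q(\G) = 0$ are forced by Ulrich-ness of the sub- and quotient bundles via Lemma~\ref{exa}. The main obstacle is the fiber identification: ruling out alternative decompositions (e.g., $\O(2)\oplus\O^{\oplus(r-1)}$) requires a delicate interplay between Ulrich Hilbert-polynomial data and fiber cohomology.
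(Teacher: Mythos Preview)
Your ``if'' direction is essentially the paper's. The ``only if'' direction, however, has a genuine gap that you yourself flag: the fiber identification $\E_{|F}$. In the $c_1(\E)^3>0$ case you assert $\E_{|F}\cong\Omega_{\PP^2}(2)\oplus\O^{\oplus(r-2)}$ via ``Ulrich Hilbert-polynomial data and fiber cohomology'', and in the $c_1(\E)^3=0$ case you assert $\E_{|F}\cong\O_{\PP^2}(1)^{\oplus r}$, but neither is justified. Writing $\det\E\equiv a\xi+bF$, nothing you invoke pins down $a$: Ulrich vanishings via Leray only give conditions on $R^j\pi_*\E(-p\xi)$ as sheaves on $B$, and non-bigness of $\E$ does not transfer to the fibers (Lemma~\ref{nonbig} goes the other way). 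Moreover, even granting $\E_{|F}\cong\O(1)^{\oplus r}$ in the $c_1(\E)^3=0$ case, you only get $\E\cong\pi^*\G'\otimes\O_X(H)$; this is a statement about $\E$, not about $X$, and does not produce a second projection $X\to\PP^2$ or force $\F$ to split.

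The paper's key idea, which your proposal lacks, is the degeneracy locus: choose $r$ general sections of $\E$ to obtain $0\to\O_X^{\oplus r}\to\E\to\L\to 0$ with $\L$ a line bundle on a smooth irreducible surface $W\in|\det\E|$. Then $\L$ is Ulrich on $W$ with $\L^2=0$ (the latter from non-bigness of $\E$), so \cite[Thm.~2]{lo} forces $W$ to be a linear $\PP^1$-bundle $p:W\to C$. Adjunction on the fibers $f$ of $p$ now splits the analysis: if $f\cdot F\ge 1$ one gets $d=3$, $g(B)=0$, hence $X=\PP^1\times\PP^2$ (this is how case (ii) actually arises); if $f\cdot F=0$ then $a=1$, so $\E_{|R}\cong\O^{\oplus(r-1)}\oplus\O(1)$ on every line $R\subset F$, and Elencwajg's classification of uniform bundles together with the sequence $0\to\O_F^{\oplus r}\to\E_{|F}\to\L_{|F}\cong\O_{\PP^1}\to 0$ forces $\E_{|F}\cong\Omega_{\PP^2}(2)\oplus\O^{\oplus(r-2)}$. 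From there the paper uses a Beilinson-type spectral sequence on the scroll (\cite{ab}) to produce the inclusion $\Omega_{X/B}(2\xi+\pi^*M)\hookrightarrow\E$, with $H^q(M)=0$ already known from the Ulrich structure of $\L$ on $W$. Your proposed endgame via $\pi_*\SHom(\Omega_{X/B}(2H),\E)$ and base change is a legitimate and arguably cleaner alternative to the spectral sequence \emph{once} the fiber identification is in hand (and one can then recover $H^q(M)=0$ a posteriori since the cokernel is $\pi^*$ of a bundle and $R^1\pi_*\Omega_{X/B}=\O_B$), but without the degeneracy-locus input you have no way to establish $a=1$ or the splitting type on $F$.
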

\begin{proof} 
If $\E$ is as in (i) or (ii), then $\E$ is Ulrich for $(X,H)$ by \cite[Lemma 4.1]{lo}. Also $\E$ is not big by Remark \ref{seg}.

If $\E$ is as in (iii), then $\E$ is Ulrich for $(X,H)$ because so are $\Omega_{X/B}(2H+\pi^*M)$ by Lemma \ref{exa} and $\pi^*(\G(\det \F))$ by \cite[Thm.~3]{lo}. Moreover
$$c_1(\E) = c_1(\Omega_{X/B}(2H+\pi^*M)) + c_1(\pi^*(\G(\det \F))$$
and we know that $c_1(\pi^*(\G(\det \F))$ and $c_1(\Omega_{X/B}(2H+\pi^*M))$ are nef and $c_1(\Omega_{X/B}(2H+\pi^*M))^3>0$ by Lemma \ref{exa}. Therefore also $c_1(\E)^3>0$. Let $F \cong \PP^2$ be a fiber of $\pi$. Restricting  \eqref{ext} to $F$ we get an exact sequence
$$0 \to \Omega_{\PP^2}(2) \to \E_{|F} \to \O_{\PP^2}^{\oplus r-2} \to 0$$
which splits since $H^1(\Omega_{\PP^2}(2))=0$ and therefore 
$$\E_{|F} \cong \Omega_{\PP^2}(2) \oplus \O_{\PP^2}^{\oplus r-2}$$
is easily seen not to be big. But then Lemma \ref{nonbig} implies that $\E$ is not big.

Viceversa assume now that $\E$ is Ulrich for $(X,H)$ and that $\E$ is not big. 

If $c_1(\E)^2=0$ it follows by \cite[Proof of Thm.~3]{lo} (or \cite[Cor.~1]{ls}) that $\E$ is as in (i). 

Suppose then that $c_1(\E)^2 \ne 0$.

Let $\tau_0, \ldots, \tau_{r-1} \in H^0(\E)$ be general sections, thus giving rise to a general morphism $\phi: \O_X^{\oplus r} \to \E$. Note that the expected codimension of the degeneracy locus $D_{r-2}(\phi)$ is $4$. Therefore it follows by \cite[Statement(folklore), \S 4.1]{ba} that $D_{r-2}(\phi) = \emptyset$ and that, at any point where $\phi$ drops rank, it has rank $r-1$. Thus we have an exact sequence
\begin{equation}
\label{elle}
0 \to \O_X^{\oplus r} \to \E \to \L \to 0
\end{equation}
where $\L$ is a line bundle on $W = D_{r-1}(\phi)$. Moreover, again \cite[Statement(folklore), \S 4.1]{ba} implies that $W$ is smooth. We now claim that $W$ is irreducible. In fact consider the map
$$\lambda_{\E} : \Lambda^r H^0(\E) \to H^0(\det \E).$$
Then $|\Im \lambda_{\E}|$ is base-point free and not composite with a pencil since  $c_1(\E)^2 \ne 0$.
Therefore any element of $|\Im \lambda_{\E}|$, hence also $W$, is connected. It follows that $W$ is a smooth irreducible surface. 

Set $d = H^3, g = g(B)$ and let $F$ be a fiber of $\pi$. Note that $d \ge 3$ for otherwise $X$ would be $\PP^3$ or a quadric, contradicting the fact that $\rho(X)=2$. There are integers $a$ and $b$ and a line bundle $N$ on $B$ such that
\begin{equation}
\label{vdoppio}
W \sim \det \E \sim a\xi + \pi^*N \equiv a \xi + b F.
\end{equation}
Hence $a \ge 0$ and $0 \ne W^2 = a^2 \xi^2 + 2ab \xi F$, so that $a \ge 1$.

\begin{claim}
\label{menodue}
$(W, H_{|W}) \ne (\PP^2, \O_{\PP^2}(1))$. In particular $F \not\subseteq W$.
\end{claim}
\begin{proof}
Assume that $(W, H_{|W}) \cong (\PP^2, \O_{\PP^2}(1))$. Let $R$ be a line in $W \cong \PP^2$, so that 
$$1 = H_{|W} \cdot R = H \cdot R = \xi \cdot R.$$
But then the adjunction formula and \eqref{vdoppio} give
$$-3 = R \cdot K_{W} = R \cdot (K_X+W) = R \cdot [(a-3)\xi + (2g-2+d+b)F]$$
that is 
$$a + b R \cdot F + (2g-2+d) R \cdot F = 0.$$
Since $d \ge 3, R \cdot F \ge 0$ and
$$0 \le W \cdot R =  \det \E \cdot R = a + bR \cdot F$$
we get that $R \cdot F=0$ and then $a = 0$, a contradiction.
\end{proof}

\begin{claim}
\label{menouno}
$\L$  is an Ulrich line bundle for $(W, H_{|W})$ and $\L^2 = 0$. 
\end{claim}
\begin{proof} 
Note that $H^i(\E(-pH)=H^i(\O_X(-pH))=0$ for $i \ge 0$ and $p=1, 2$. Then \eqref{elle} gives that 
$H^i(\L(-pH_{|W})=0$ for $i \ge 0$ and $p=1, 2$, that is $\L$ is an Ulrich line bundle for $(W, H_{|W})$.

Let 
$$V = \varphi_{\O_{\PP(\E)}(1)}(\PP(\E)) \subseteq \PP H^0(\O_{\PP(\E)}(1)).$$ 
Since $\E$ is not big it follows that $\dim V \le r+1$. On the other hand \eqref{elle} implies that there is an inclusion $W \hookrightarrow \PP(\E)$ such that ${\O_{\PP(\E)}(1)}_{|W} \cong \L$ and that 
$$\varphi_{\O_{\PP(\E)}(1)}(W) \subseteq V \cap H_1 \cap \ldots \cap H_r$$
for general hyperplanes $H_1, \ldots, H_r$. But then
$$\dim \varphi_{\L}(W) \le \dim V - r \le 1.$$
As $\L$ is globally generated it follows that $\L^2 = 0$. 
\end{proof}

Using Claim \ref{menouno}, \cite[Thm.~2]{lo} and Claim \ref{menodue} we get that 
\begin{equation}
\label{mio}
(W, H_{|W}, \L) \cong (\PP(\F''), \O_{\PP(\F'')}(1) \otimes p^*L, p^*(M'+L+\det \F''))
\end{equation}
is a $\PP^1$-bundle $p : W \cong \PP(\F'') \to C$, with $C$ a smooth curve, $\F''$ a rank two vector bundle on $C$, $M', L$ two line bundles on $C$ with $H^i(M'-L)=0$ for every $i \ge 0$. 

Set $\xi' = \O_{\PP(\F'')}(1)$ and let $f$ be a fiber of $p$.

\begin{claim}
\label{num}
If $f \cdot F=0$ then $a = 1$. If $f \cdot F \ge 1$, then $b = -a, d=3, g=0$ and $c_1(\E)^3 = 0$. 
\end{claim}
\begin{proof} 
By \eqref{mio} we have that $H_{|W} \equiv \xi'+uf$ for some $u \in \ZZ$, hence
$$1 = \xi' \cdot f = H_{|W} \cdot f = H \cdot f = \xi \cdot f.$$
By the adjunction formula and \eqref{vdoppio} we deduce that
$$-2 = f \cdot K_{W} = f \cdot (K_X+W) = f \cdot [(a-3)\xi + (2g-2+d+b)F] = a-3+(2g-2+d+b) f \cdot F$$
hence
\begin{equation}
\label{effe}
a + bf \cdot F + (2g-2+d) f \cdot F = 1.
\end{equation}
If $f \cdot F=0$ we have that  $a = 1$. If $f \cdot F \ge 1$, since
$$0 \le W \cdot f =  \det \E \cdot f = a + bf \cdot F$$
we get from \eqref{effe} that  $a + bf \cdot F=0, f \cdot F = 1$ and $2g-2+d=1$. The latter imply that $b=-a, d=3$ and $g=0$. This gives that $c_1(\E)^3 = 0$.
\end{proof}

We now conclude the case $f \cdot F \ge 1$. As it will turn out, it is the only case with $c_1(\E)^3 = 0$.
\begin{claim}
\label{cubozero}
If $f \cdot F \ge 1$, then $(X, H, \E)$ is as in (ii).
\end{claim}
\begin{proof} 
By Claim \ref{num} we know that $d=3$ and $g=0$, that is $\F \cong \O_{\PP^1}(1)^{\oplus 3}$ and then $X = \PP^1 \times \PP^2$ and $H = \O_{\PP^1}(1) \boxtimes \O_{\PP^2}(1)$. Moreover $\det \E \sim a(\xi-F) \sim \pi_2^*(\O_{\PP^2}(a))$ and \cite[Lemma  5.1]{lo} gives that there is a rank $r$ vector bundle $\H$ on $\PP^2$ such that $\E \cong  \pi_2^*\H$. Setting $\H' =\H(-1)$ we deduce by \cite[Lemma  4.1(ii)]{lo} that $\H'$ is a rank $r$ Ulrich vector bundle for $(\PP^2, \O_{\PP^2}(1))$, whence that $\H' \cong \O_{\PP^2}^{\oplus r}$ and therefore $\E \cong \pi_2^*(\O_{\PP^2}(1))^{\oplus r}$.
\end{proof}
We will assume, from now on, that $f \cdot F=0$. Recall that $a = 1$ by Claim \ref{num}. 

\begin{claim}
\label{secondo}
There is a rank two vector bundle $\F'$ on $B$ and a surjection $\F \twoheadrightarrow \F'$ such that
$$(W, H_{|W}, \L) \cong (\PP(\F'), \O_{\PP(\F')}(1), \pi_{|W}^*(M+\det \F'))$$ 
for some line bundle $M$ on $B$ such that $H^i(M)=0$ for every $i \ge 0$.
\end{claim}
\begin{proof} 
Since $f \cdot F=0$ it follows that any fiber $f$ of $p$ is contained in a fiber $F$ of $\pi$ and \cite[Lemma 1.15(b)]{de} allows to construct a morphism $\psi : C \to B$ such that $\pi_{|W} = \psi \circ p$. Also $H \cdot W \cdot F = 1$ hence $W \cap F$ is a line, for every $F$. Therefore, for every $b \in B$ we have that
$$W \cap \pi^{-1}(b) = \pi_{|W}^{-1}(b) = p^{-1}(\psi^{-1}(b))$$
is a line, hence $\psi$ must have degree $1$, that is $\psi$ is an isomorphism. Hence we can assume that $C=B$ and $p = \pi_{|W}$. Now $\xi_{|W} = H_{|W} = \xi'+\pi_{|W}^*L$ by \eqref{mio} and this gives the exact sequence 
\begin{equation}
\label{rest}
0 \to \O_X(\xi-W) \to \O_X(\xi) \to \O_W(\xi'+\pi_{|W}^*L) \to 0.
\end{equation}
Moreover we know that $W \sim \xi + \pi^*N$, for some line bundle $N$ on $B$ and then $R^1\pi_*(\O_X(\xi-W)) = 0$. Appying $\pi_*$ to \eqref{rest} we get the exact sequence 
$$0 \to -N \to \F \to \F''(L) \to 0$$ 
and setting $\F' = \F''(L)$ and $M=M'-L$ we get that $W \cong \PP(\F')$, $H_{|W} \cong \O_{\PP(\F')}(1)$ and $\L \cong \pi_{|W}^*(M+\det \F')$ with $H^i(M)=0$ for every $i \ge 0$.
\end{proof}

\begin{claim}
\label{terzo}
$\E_{|F} \cong \O_{\PP^2}^{\oplus r-2} \oplus \Omega_{\PP^2}(2)$.
\end{claim}
\begin{proof} 
Let $R$ be any line in $F \cong \PP^2$. It follows by Claim \ref{secondo} that $W \cdot R = 1$. Then $\E_{|R}$ is globally generated and $c_1(\E_{|R})=1$, that is $\E_{|R} \cong \O_{\PP^1}^{\oplus r-1} \oplus \O_{\PP^1}(1)$. Now \cite[Thm.~5.1]{e} implies that either $\E_{|F} \cong \O_{\PP^2}^{\oplus r-2} \oplus \Omega_{\PP^2}(2)$ or $\E_{|F} \cong \O_{\PP^2}^{\oplus r-1} \oplus \O_{\PP^2}(1)$.

On the other hand, as $F \not\subseteq W$ by Claim \ref{menodue}, we have that ${\tau_0}_{|F}, \ldots, {\tau_{r-1}}_{|F} \in H^0(\E_{|F})$ are linearly independent at the general point of $F$ and therefore we have an exact diagram, obtained from \eqref{elle},
$$\xymatrix{& 0 \ar[d] & 0 \ar[d] & 0 \ar[d] & \\ 0 \ar[r] & \O_X(-F)^{\oplus r} \ar[r] \ar[d] & \E(-F) \ar[r] \ar[d] & \L \otimes \O_X(-F)  \ar[r] \ar[d] & 0 \\ 0 \ar[r] & \O_X^{\oplus r} \ar[r]  \ar[d] & \E \ar[r]  \ar[d] & \L \ar[r]  \ar[d] & 0 \\ 0 \ar[r] & \O_F^{\oplus r} \ar[r] \ar[d] & \E_{|F} \ar[r] \ar[d] & \L_{|F} \ar[r] \ar[d] & 0 \\ & 0 & 0 & 0 & }.$$
Observe that Claim \ref{secondo} gives that $W \cap F$ is a line and that $\L_{|F} \cong \O_{\PP^1}$. Then the above diagram gives an exact sequence
$$0 \to \O_{\PP^2}^{\oplus r} \to \E_{|F} \to \O_{\PP^1} \to 0.$$
But the latter implies that $H^0(\E_{|F}(-1))=0$, therefore excluding the case $\O_{\PP^2}^{\oplus r-1} \oplus \O_{\PP^2}(1)$.
\end{proof}

\begin{claim}
\label{quarto}
Let $M$ be the line bundle in Claim \ref{secondo} and set $\mathcal M = \E(-\xi-\pi^*M)$. For any $b \in B$ set $F_b = \pi^{-1}(b)$. Then we have:
\begin{itemize}
\item[(i)] $H^2(\mathcal M(j\xi-F_b))=0$ for $j = 0, -2$; 
\item[(ii)] $H^1(\mathcal M(j\xi-F_b))=0$ for $j = 0, -1, -2$; 
\item[(iii)] $H^1(\mathcal M(-j\xi))=0$ for $j = 0, -2$;
\item[(iv)] $h^1(\mathcal M(-\xi)) = 1$;
\item[(v)] $H^0(\mathcal M) = 0$. 
\end{itemize}
\end{claim}
\begin{proof} 
We start with an observation that we will use throughout the proof. 

Let $D=a_1P_1+\ldots+a_sP_s$ be an effective divisor on $B$, where $P_1,\ldots,P_s \in B$ are distinct points and $a_k \ge 1$ for $1 \le k \le s$. Let $\H$ be a vector bundle on $X$. Given an $i \ge 0$, to prove that $H^i(\H_{|\pi^*D})=0$ it is enough to show that $H^i(\H_{|F})=0$ for any fiber $F$.

In fact $\pi^*D = a_1F_{P_1}+\ldots+a_sF_{P_s}$ and 
\begin{equation}
\label{mult}
H^i(\H_{|\pi^*D})=\bigoplus\limits_{k=0}^s H^i(\H_{|a_kF_{P_k}}).
\end{equation}
On the other hand, using for every $c \ge 1$, the exact sequence 
$$0 \to \O_F \to \O_{cF} \to \O_{(c-1)F} \to 0$$
it is clear that if $H^i(\H_{|F})=0$ then $H^i(\H_{|cF})=0$ by induction on $c$. Therefore also $H^i(\H_{|\pi^*D})=0$ by \eqref{mult}.

Since $\E$ is ACM, we have that $H^i(\E(j\xi))=0$ for $i=1, 2$ and for every $j \in \ZZ$.

To see (i) and (ii), note that when $g=0$ we have that $\mathcal M(j\xi-F_b) = \E((j-1)\xi)$, hence (i) and (ii) hold. When $g \ge 1$ we have that $\deg(M+b)=g$ so that there exists an effective divisor $D \sim M+b$ and an exact sequence
$$0 \to \mathcal M(j\xi-F_b) \to \E((j-1)\xi) \to \E((j-1)\xi)_{|\pi^*D} \to 0.$$
Therefore the above observation gives that, to prove (i) and (ii), it is enough to show that 
$$H^1(\E((j-1)\xi)_{|F})=0 \ \mbox{for} \ j = 0, -2 \ \mbox{and} \ H^0(\E((j-1)\xi)_{|F})=0  \ \mbox{for} \ j = 0, -1, -2.$$ 
But these vanishings follow easily from Claim \ref{terzo}. Hence (i) and (ii) are proved.

Now let $j = 0, -2$. Then $H^1(\mathcal M(j\xi-F_b))=0$ by (ii) and $H^1(\mathcal M(j\xi)_{|F_b}) = H^1(\E((j-1)\xi)_{|F_b})=0$ by Claim \ref{terzo}. Thus the exact sequence
$$0 \to \mathcal M(j\xi-F_b) \to  \mathcal M(j\xi) \to \mathcal M(j\xi)_{|F_b} \to 0$$
gives (iii). To see (iv) consider the exact sequence, obtained from \eqref{elle},
$$0 \to \O_X(-2\xi-\pi^*M)^{\oplus r} \to \mathcal M(-\xi) \to \L(-2\xi-\pi^*M) \to 0.$$
Since $H^i(\O_X(-2\xi-\pi^*M))=0$ for $i=1,2$ we get, using Claim \ref{secondo}, that
$$h^1(\mathcal M(-\xi)) = h^1(\L(-2\xi-\pi^*M))=h^1(\O_{\PP(\F')}(-2) \otimes \pi_{|W}^*(\det \F'))=$$
$$=h^0(R^1(\pi_{|W})_*(\O_{\PP(\F')}(-2)) \otimes \O_B(\det \F')) = h^0(\O_B)=1.$$
Thus (iv) is proved. Moreover the exact sequence, obtained from \eqref{elle},
$$0 \to \O_X(-\xi-\pi^*M)^{\oplus r} \to \mathcal M \to \L(-\xi-\pi^*M) \to 0$$
implies that $H^0(\mathcal M)=0$, that is (v), because $H^0(\O_X(-\xi-\pi^*M))=0$ and $H^0(\L(-\xi-\pi^*M))=H^0(\O_{\PP(\F')}(-1) \otimes \pi_{|W}^*(\det \F'))=0$.
\end{proof}
Let $\mathcal M$ be as in Claim \ref{quarto}. By \cite[Thm.~4.1]{ab} there is a second quadrant spectral sequence 
\begin{equation}
\label{ssp0}
\{E_1^{p,q}, -3 \le p \le 0 \le q \le 3\} \ \mbox{abutting to} \ \mathcal M \ \mbox{when} \ p+q=0
\end{equation}
with the following properties. There is an exact sequence
\begin{equation}
\label{ssp}
\cdots \to R^q {p_1}_*(h^*(-\Delta) \otimes p_2^*\mathcal M((p+1)\xi)) \otimes \Omega_{X/B}^{-p-1}(-(p+1)\xi) \to E_1^{p,q} \to H^q(\mathcal M(p\xi)) \otimes \Omega_{X/B}^{-p}(-p\xi) \to
\end{equation}
$$\to R^{q+1}{p_1}_*(h^*(-\Delta) \otimes p_2^*\mathcal M((p+1)\xi)) \otimes \Omega_{X/B}^{-p-1}(-(p+1)\xi) \to \cdots$$

where $p_1, p_2 : X \times X \to X$ are the projections, $\Delta$ is the diagonal in $B \times B$ and $h= \pi \times \pi$.
Moreover
\begin{equation}
\label{ssp2}
E_1^{0,q} \cong H^q(\mathcal M) \otimes \O_X
\end{equation}
and
\begin{equation}
\label{ssp3}
E_1^{-3,q} \cong R^q{p_1}_*(h^*(-\Delta) \otimes p_2^*\mathcal M(-2\xi)) \otimes \O_X(-\xi+\pi^*(\det \F)).
\end{equation}

\begin{claim}
\label{quinto}
With the above notation we have:
\begin{itemize}
\item[(i)] $E_1^{0,0} = E_1^{0,1} = E_1^{-2,1} = E_1^{-3,2} = 0$; 
\item[(ii)] $E_{\infty}^{-1,1} = E_1^{-1,1} \cong \Omega_{X/B}(\xi)$. 
\end{itemize}
\end{claim}
\begin{proof} 
By Claim \ref{quarto}(iii)-(v) we have that $H^0(\mathcal M)=H^1(\mathcal M)=H^1(\mathcal M(-2\xi))=0$. It follows by \eqref{ssp2} that $E_1^{0,0}=E_1^{0,1} = 0$. Moreover Claim \ref{quarto}(ii) gives that $H^1(\mathcal M(-\xi-F_b))=H^2(\mathcal M(-2\xi-F_b))=0$ and then \cite[Rmk.~4.2]{ab} implies that $R^1{p_1}_*(h^*(-\Delta) \otimes p_2^*\mathcal M(-\xi))=R^2{p_1}_*(h^*(-\Delta) \otimes p_2^*\mathcal M(-2\xi))=0$. Therefore $E_1^{-2,1} = 0$ by \eqref{ssp} and $E_1^{-3,2} = 0$ by \eqref{ssp3}. This proves (i). To see (ii) consider, for $s \ge 1$, the differentials
$$E_s^{-1-s,s} \to E_s^{-1,1} \to E_s^{-1+s,2-s}.$$
We have that $E_s^{-1+s,2-s}=0$ because either $s=1$ and this is (i), or $s \ge 2$ and then $-1+s \ge 1$. On the other hand we have that $E_s^{-1- s,s}=0$ because either $s=1, 2$ and this follows by (i), or $s \ge 3$ and then $-1- s \le -4$. Therefore 
$E_{\infty}^{-1,1} = E_1^{-1,1}$. Moreover again \cite[Rmk.~4.2]{ab} and Claim \ref{quarto}(i)-(ii) imply that $R^1{p_1}_*(h^*(-\Delta) \otimes p_2^*\mathcal M)=R^2{p_1}_*(h^*(-\Delta) \otimes p_2^*\mathcal M)=0$. It follows from \eqref{ssp} and Claim \ref{quarto}(iv) that $E_1^{-1,1} \cong \Omega_{X/B}(\xi)$. This is (ii).
\end{proof}
We now proceed to conclude the proof of the theorem.

Consider the filtration associated to the spectral sequence \eqref{ssp0}
$$\mathcal M = F^3 \supseteq F^2 \supseteq F^1 \supseteq F^0 \supseteq F^{-1}=0.$$
It follows by Claim \ref{quinto} that $F^0 = F^0/F^{-1} = E_{\infty}^{0,0} = 0$ and then $F^1 = F^1/F^0 = E_{\infty}^{-1,1} = E_1^{-1,1} \cong \Omega_{X/B}(\xi)$. But then we have an inclusion $\Omega_{X/B}(\xi) = F^1 \subseteq \mathcal M$ and therefore also an inclusion 
$$\Omega_{X/B}(2\xi+\pi^*M) \hookrightarrow \E.$$ 
Let $\mathcal Q$ be the quotient and consider the exact sequence
\begin{equation}
\label{q}
0 \to \Omega_{X/B}(2\xi+\pi^*M) \to \E \to \mathcal Q \to 0.
\end{equation}
%
Now Lemma  \ref{exa} gives that both $\Omega_{X/B}(2\xi+\pi^*M)$ and $\E$ are Ulrich vector bundles for $(X,H)$, hence so is $\mathcal Q$ by \cite[Prop. 2.14]{ckm}. Moreover, using Claim \ref{secondo}, we see that $c_1(\mathcal Q) = c_1(\E)-c_1(\Omega_{X/B}(2\xi+\pi^*M)) \equiv vF$ for some $v \in \ZZ$. But then $c_1(\mathcal Q)^2=0$ and it follows by \cite[Proof of Thm.~3]{lo} (or \cite[Cor.~1]{ls}) that $\mathcal Q \cong \pi^*(\G(\det \F))$ where $\G$ is a rank $r-2$ vector bundle on $B$ such that $H^i(\G)=0$ for every $i \ge0$. 

This gives case (iii). 
\end{proof}

\section{Non-big Ulrich bundles on threefolds}

In this section we will prove Theorem \ref{main2}. We start with a couple of consequences of \cite{ls}.

\begin{lemma}
\label{riga}

Let $X  \subseteq \PP^N$ be a smooth irreducible threefold. Suppose that $X$ carries a rank $r$ Ulrich vector bundle $\E$ such that $\E$ is not big and $c_1(\E)^3>0$. 

Then $(X,\O_X(1))$ is one of the following:
\begin{itemize}
\item [(i)] $(Q, \O_Q(1)$, where $Q \subset \PP^{n+1}$ is a quadric;
\item [(ii)] $(\PP(\F), \O_{\PP(\F)}(1))$, where $\F$ is a very ample rank $3$ vector bundle on a smooth curve.
\end{itemize}
\end{lemma}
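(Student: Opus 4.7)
\emph{Plan.} As $\E$ is Ulrich it is globally generated by Remark~\ref{ulr}, hence nef, so $c_1(\E)$ is nef. The hypothesis $c_1(\E)^3>0$ promotes $c_1(\E)$ to nef and big, while non-bigness of $\E$ is equivalent via Remark~\ref{seg} to the vanishing of the top dual Segre class $s_3(\E^*)=0$. The plan is to feed these positivity data into the classification of \cite{ls}, the same tool already used elsewhere in the paper for the adjacent regime $c_1(\E)^n=0$.

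Concretely, I expect \cite{ls} to furnish a short list of polarized smooth threefolds $(X,\O_X(1))$ admitting a non-big, globally generated vector bundle whose top dual Segre class vanishes and whose determinant is nef and big. Coupled with the Ulrich hypothesis, the only survivors should be: the smooth quadric threefold $Q\subset\PP^4$, and projective bundles $\PP(\F)$ of relative dimension two over a smooth curve with the tautological polarization. The degenerate cases $X=\PP^3$ and $\PP(\F)$ over a base of dimension $\ge 2$ are eliminated respectively by $c_1(\E)\neq 0$ (forced by $c_1(\E)^3>0$) and by the fact that such scrolls already satisfy $c_1(\E)^3=0$ (they appear as case (iii) of Theorem~\ref{main2}, the regime $c_1(\E)^3=0, c_1(\E)^2\ne 0$). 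Very ampleness of $\F$ in case (ii) is inherited from very ampleness of $\O_X(1)=\O_{\PP(\F)}(1)$ via the projective embedding $X\subseteq\PP^N$.

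The main obstacle I anticipate is locating, in \cite{ls}, the precise statement that packages this dichotomy together, and verifying that the Ulrich hypothesis is strong enough to rule out all intermediate configurations (for instance Fano threefolds of Picard rank one other than $Q$, various del Pezzo threefolds, or scrolls over higher-dimensional bases with atypical polarizations). Once the classification from \cite{ls} is cited, the deduction is immediate, with the only mild subtlety being to check that the scroll structure extracted from \cite{ls} is polarized by the tautological line bundle and that $\rk\F=3$, both of which are forced by $\dim X=3$ and by $\O_X(1)$ restricting to $\O(1)$ on the fibers.
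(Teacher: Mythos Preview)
Your plan misidentifies both the mechanism and the source of the classification. The paper does \emph{not} feed the condition $s_3(\E^*)=0$ (or nef-and-bigness of $c_1(\E)$) directly into \cite{ls} to obtain a short list. What \cite[Cor.~2]{ls} actually provides is a lower bound on the dimension $m$ of the family of lines through a general point of $X$: from $\nu(\E)\le r+1$ (non-bigness) one gets $m\ge r-\nu(\E)+2\ge 1$. The classification then comes from an entirely separate reference, Lanteri--Palleschi \cite[Thm.~1.4]{lp}, which says that a smooth threefold with $m\ge 1$ and $(X,\O_X(1))\ne(\PP^3,\O_{\PP^3}(1))$ is either a quadric or a linear $\PP^2$-bundle over a curve. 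The case $\PP^3$ is excluded exactly as you anticipated, via $c_1(\E)\ne 0$.

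In particular, your worry about having to rule out ``Fano threefolds of Picard rank one other than $Q$, various del Pezzo threefolds, or scrolls over higher-dimensional bases'' is misplaced: those cases arise in the proof of Theorem~\ref{main2} only under the weaker hypothesis that $X$ is merely covered by lines ($m\ge 0$), whereas here the stronger input $m\ge 1$ from \cite{ls} bypasses them entirely. So the route through $s_3(\E^*)=0$ and a hoped-for omnibus statement in \cite{ls} is not how the argument goes; the two key steps are (a) non-bigness $\Rightarrow$ $m\ge 1$ via \cite{ls}, and (b) $m\ge 1$ $\Rightarrow$ quadric or $\PP^2$-scroll via \cite{lp}.
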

\begin{proof}
Set $\nu = \nu(\E)$. By hypothesis we have that $\nu \le r+1$. Let $m$ be the dimension of the family of lines in $X$ passing through a general point. By \cite[Cor.~2]{ls} we get that $m \ge r-\nu+2 \ge 1$. Note that $(X,\O_X(1)) \ne (\PP^3, \O_{\PP^3}(1))$, for otherwise it follows by \cite[Prop.~2.1]{es} (or \cite[Thm.~2.3]{b2}) that $\E \cong \O_{\PP^3}^{\oplus r}$, contradicting the fact that $c_1(\E)^3>0$. Now \cite[Thm.~1.4]{lp} gives that $(X,\O_X(1))$ is as in (i) or (ii).
\end{proof}

Before giving the next result, let us adopt the following

\begin{defi}
Let $X$ be a smooth irreducible variety and let $H$ be a very ample line bundle on $X$. We say that $(X, H)$ is a linear $\PP^k$-bundle over a smooth variety $Y$ if there is a rank $k+1$ vector bundle $\F$ on $Y$ and a line bundle $L$ on $Y$ such that $(X, H) \cong (\PP(\F), \O_{\PP(\F)}(1) + \pi^*L)$, where $\pi : \PP(\F) \to Y$ is the projection.
\end{defi}

\begin{prop}
\label{surf}

Let $S$ be a smooth irreducible surface and let $\F$ be a very ample rank $2$ vector bundle on $S$. Let $X = \PP(\F)$ with tautological line bundle $\xi$ and projection $\pi: X \to S$. Set $H=\xi$. Assume that $(X, H)$ is not a linear $\PP^2$-bundle over a smooth curve.

Let $\E$ be a rank $r$ Ulrich vector bundle for $(X,H)$ such that $c_1(\E)^3=0$ and $c_1(\E)^2 \ne 0$.

Then there are a smooth irreducible surface $B$, a very ample rank $2$ vector bundle $\G$ on $B$, a rank $r$ vector bundle $\H$ on $B$ with $H^q(\H)=H^q(\H \otimes \G^*)=0$ for $q \ge 0$, so that $X \cong \PP(\G), H \cong \O_{\PP(\G)}(1)$ and $\E \cong p^*(\H(\det \G))$ where $p: \PP(\G) \to B$ is the projection.
\end{prop}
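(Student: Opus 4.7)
The plan is to adapt the degeneracy-locus strategy of Theorem \ref{bei} to the present set-up, where $X$ is a $\PP^1$-bundle over the surface $S$ rather than a $\PP^2$-bundle over a curve. The goal is to force $\E$ to be a (twisted) pullback from $S$, so that one may take $B = S$ and $\G = \F$.

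First I take $r$ general sections $\tau_0,\ldots,\tau_{r-1} \in H^0(\E)$ and set $\phi : \O_X^{\oplus r} \to \E$. Since the expected codimension of $D_{r-2}(\phi)$ is $4 > 3 = \dim X$, the folklore statement of \cite[\S 4.1]{ba} gives $D_{r-2}(\phi) = \emptyset$ and that $W := D_{r-1}(\phi)$ is smooth, so one obtains a short exact sequence
\[
0 \to \O_X^{\oplus r} \to \E \to \L \to 0,
\]
with $\L$ a line bundle on $W$. As in Theorem \ref{bei}, the assumption $c_1(\E)^2 \ne 0$ shows that the image of $\lambda_\E : \Lambda^r H^0(\E) \to H^0(\det \E)$ is base-point free and not composite with a pencil, hence $W$ is a smooth irreducible surface. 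Restricting to $W$ shows that $\L$ is Ulrich for $(W, H_{|W})$, and the non-bigness of $\E$ together with the embedding $W \hookrightarrow \PP(\E)$ cut out by the $r$ sections forces $\L^2 = 0$ exactly as in Claim \ref{menouno}.

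Now I apply Theorem \ref{main1} to $(W, H_{|W}, \L)$. The option $W \cong \PP^2$ must give $\L = \O_{\PP^2}$, hence $h^0(\L) = 1$; but the exact sequence gives $h^0(\L) \ge h^0(\E) - h^0(\O_X^{\oplus r}) = r(d-1)$, and $d = H^3 \ge 3$ since $\rho(X) = 2$ forces $X$ to be neither $\PP^3$ nor the smooth quadric in $\PP^4$. This rules out $W \cong \PP^2$. Hence
\[
(W, H_{|W}, \L) \cong (\PP(\F''), \O_{\PP(\F'')}(1), p^*(\G'(\det\F'')))
\]
for a smooth curve $C$, a very ample rank $2$ bundle $\F''$ on $C$, a $\PP^1$-bundle $p : W \to C$, and a line bundle $\G'$ on $C$ with $H^q(\G') = 0$ for every $q \ge 0$.

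The crux of the argument is to show that $\pi_{|W} : W \to S$ factors through $p$; this is where I expect the main difficulty, and where the hypothesis that $(X,H)$ is not a linear $\PP^2$-bundle over a smooth curve enters. If $\pi_{|f}$ were non-constant for some fiber $f$ of $p$, then by openness the same would hold for every generic fiber, so $W$ would meet a generic fiber of $\pi$ non-trivially, i.e., $c_1(\E) \cdot F \neq 0$. Using $N_{f/W} = \O_{\PP^1}$ together with the splitting $N_{f/X} \cong \O_{\PP^1} \oplus N_{W/X}{}_{|f}$ (which follows since $H^1$ of a non-negative line bundle on $\PP^1$ vanishes), a deformation argument along the lines of \cite{ahmpl} and Claim \ref{cubozero} constructs a linear $\PP^2$-bundle structure on $(X, H)$ over $C$, contradicting the hypothesis. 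Therefore $\pi_{|W} = \psi \circ p$ for some non-constant morphism $\psi : C \to S$, whose image $C_0 := \psi(C)$ is an irreducible curve in $S$; comparing dimensions of the two irreducible divisors in $X$ then gives $W = \pi^{-1}(C_0)$.

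Granted this, $c_1(\E) = [W] = \pi^*\O_S(C_0)$, so $c_1(\E) \cdot F = 0$ for every fiber $F$ of $\pi$. Since $\E_{|F}$ is a globally generated rank $r$ vector bundle on $F \cong \PP^1$ of zero degree, $\E_{|F} \cong \O_{\PP^1}^{\oplus r}$. Cohomology and base change then gives that $\pi_*\E$ is locally free of rank $r$ on $S$ and that the natural map $\pi^*\pi_*\E \to \E$ is an isomorphism. Setting $\H := (\pi_*\E) \otimes \O_S(-\det\F)$, we obtain $\E \cong \pi^*(\H(\det\F))$. Finally, the Ulrich vanishings $H^i(\E(-p\xi)) = 0$ (for $p=1,2,3$) translate, via the Leray spectral sequence and the identifications $\pi_*\O_X(-\xi) = R^1\pi_*\O_X(-\xi) = 0$, $R^1\pi_*\O_X(-2\xi) = (\det\F)^{-1}$, $R^1\pi_*\O_X(-3\xi) = \F^\vee \otimes (\det\F)^{-1}$, into the required vanishings $H^q(\H) = H^q(\H \otimes \F^*) = 0$ for all $q \ge 0$. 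Taking $B = S$ and $\G = \F$ yields the statement.
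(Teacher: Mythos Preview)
Your approach is genuinely different from the paper's, and the overall architecture---reduce to an Ulrich line bundle $\L$ on a degeneracy surface $W$, classify $(W,H_{|W},\L)$ via Theorem~\ref{main1}, and then argue that $\E$ is a $\pi$-pullback---is appealing. However, the ``crux'' step has a real gap, and in fact your strategy of always taking $B=S$ and $\G=\F$ cannot succeed.

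The problem is the asserted deformation argument. You claim that if $\pi_{|f}$ is non-constant for a fiber $f$ of $p:W\to C$, then one can build a linear $\PP^2$-bundle structure on $(X,H)$ over $C$. But take $S=\PP^2$ and $\F=T_{\PP^2}$, so $X=\PP(T_{\PP^2})$. By \cite[Prop.~6.2]{lo} there exist rank $r$ Ulrich bundles $\E$ on $(X,H)$ with $c_1(\E)^3=0$, $c_1(\E)^2\ne 0$ that are pullbacks along the \emph{second} $\PP^1$-bundle projection, not along $\pi$; for these one has $c_1(\E)\cdot F_\pi\ne 0$, so the fibers $f$ of $p$ are not contracted by $\pi$. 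Your argument would then manufacture a $\PP^2$-bundle structure on $\PP(T_{\PP^2})$ over a curve, which does not exist (both extremal contractions of this Fano threefold are the two $\PP^1$-bundle maps to $\PP^2$). So the deformation step is false as stated, and the invocation of Claim~\ref{cubozero} is misleading: that claim already sits inside a $\PP^2$-bundle over a curve and merely pins down which one, it does not construct such a structure from scratch.

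For comparison, the paper bypasses the degeneracy locus entirely and works directly with the map $\Phi_\E:X\to\mathbb G(r-1,\PP H^0(\E))$, using \cite[Thm.~1]{ls} to know that all fibers are linear spaces and the generic one is a line. If every fiber is $1$-dimensional, Stein factorization plus \cite[Prop.~3.2.1]{bs} already gives a (possibly new) linear $\PP^1$-bundle structure $p:X\cong\PP(\G)\to B$ along which $\E$ is a pullback. If some fiber is a $\PP^2$, an incidence argument with the fibers of $\pi$ forces $S=\PP^2$; then a Picard computation in $\Pic(X)=\ZZ\xi\oplus\ZZ R$ together with \cite[Thm.~6.5]{mos} and \cite[Prop.~6.2]{lo} handles the indecomposable case $X=\PP(T_{\PP^2})$, while the decomposable case collapses to $X=\PP^1\times\PP^2$, which is a linear $\PP^2$-bundle over a curve and hence excluded by hypothesis. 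The upshot is that $B$ need not be $S$: the second $\PP^1$-bundle structure on $\PP(T_{\PP^2})$ is precisely where it differs.
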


\begin{proof}
It follows by \cite[Thm.~1]{ls} that the general fiber of $\Phi_{\E} : X \to {\mathbb G}(r-1,\PP H^0(\E))$ is a line $L$ and all fibers are linear spaces in $\PP^N$. Moreover $\E_{|L}$ is trivial, hence $L \cdot \det \E = 0$.

Suppose first that all fibers of $\Phi_{\E}$ are $1$-dimensional.

Consider the Stein factorization 
$$\xymatrix{X \ar[dr]_{\Phi_{\E}} \ar[r]^p & B \ar[d]^g \\ & \Phi_{\E}(X)}$$
where $B$ is normal and $g$ is finite. Since the general fiber of $\Phi_{\E}$ is reduced and irreducible, it follows that $g$ is bijective, so that the fibers of $p$ coincide with the fibers of $\Phi_{\E}$. Now we can apply \cite[Prop.~3.2.1]{bs} and get that $B$ is a smooth surface and $p : X \cong \PP(\G) \to B$ is a linear $\PP^1$-bundle. We know that $\E = \Phi_{\E}^*\U = p^*(g^*\U)$, where $\U$ is the tautological bundle on $\mathbb G(r-1, \P H^0(\E))$. Setting $\H = g^*\U(-\det \G)$, it follows by \cite[Lemma 4.1]{lo} that we are done in this case. 

Suppose now that there is a fiber $F = \PP^2$ of $\Phi_{\E}$.

Let $f = f_x$ be a fiber of $\pi$ passing through $x \in X$. Note that $f$ is a line in $\PP^N$. 

We claim that $f \cdot F = 1$. In fact it cannot be that for every $x \in F$ we have that $f_x \subset F$, for otherwise two different fibers $f_x, f_{x'}$ would meet. Then there exists an $x \in F$ such that $f_x \not\subset F$ and therefore $f \cdot F = 1$.

Thus we deduce that $\pi_{|F} : F \to S$ is an isomorphism, that is $S = \PP^2$. 

Pick a line $Z$ in $\PP^2$ and let $R = \pi^*Z$. There are two integers $a$ and $b$ such that 
$$\det \E \sim a \xi + b R.$$
If $a=0$ we get that $\det \E = \pi^*(\O_{\PP^2}(b))$ and it follows by \cite[Lemmas 5.1 and 4.1]{lo} that we are done in this case.

Suppose from now on that $a \ne 0$. 

Note that $L \cdot R \ge 0$ and
\begin{equation}
\label{int}
0 = L \cdot \det \E = a + b L \cdot R
\end{equation}
hence $L \cdot R \ge 1$.
 
If $\F$ is indecomposable, using the dominating unsplit family of lines given by the fibers of $\Phi_{\E}$, we get by \cite[Thm.~6.5]{mos} that $X \cong \PP(T_{\PP^2})$ with tautological line bundle $\xi'$ and $H = \xi = \xi' + lR$ for some $l \in \ZZ$. Note that 
$$4 \le h^0(H) = h^0(T_{\PP^2}(l))$$
hence $l \ge 0$. If $l \ge 1$, as $\xi'$ is very ample, we get the contradiction
$$1 = H \cdot L = \xi' \cdot L + l L \cdot R \ge 2.$$
Therefore $l = 0$ and we conclude in this case by \cite[Prop.~6.2]{lo}.

It remains to consider the case $\F = \O_{\PP^2}(\alpha) \oplus \O_{\PP^2}(\beta)$ for some $\alpha \ge \beta \ge 1$. Note that 
$$\PP(\O_{\PP^2}(\alpha)) \sim \xi - \beta R$$
hence
$$0 \le L \cdot (\xi - \beta R) = 1 - \beta L \cdot R$$
and it follows that $\beta = L \cdot R = 1$ and \eqref{int} gives that $b=-a$. Now 
$$\xi^3 = \alpha^2+\alpha +1, \xi^2 \cdot R = \alpha +1, \xi \cdot R^2 = 1 \ \hbox{and} \ R^3=0$$
hence
$$0 = (\det \E)^3 = a^3(\alpha-1)^2$$
so that $\alpha=1$. But then $(X, H)$ is a linear $\PP^2$-bundle over a smooth curve. As this case is excluded, the proof is complete.
\end{proof}

\begin{remark}
If the two $\PP^1$-bundle structures on $X$ are different, it follows by \cite[Thm.~B]{sa} that either $S$ and $B$ are $\PP^1$-bundles over the same smooth curve $C$ and $X \cong S \times_C B$ or $S=B=\PP^2$ and $X \cong \PP(T_{\PP^2})$.
\end{remark}

\begin{remark}
\label{cisono}
Ulrich vector bundles as in Proposition \ref{surf} can occur. For example (see \cite[Lemma 4.1]{lo}) one can take a very ample line bundle $L$ on $B$, $\G = L^{\oplus 2}$ and $\H$ a rank $r$ vector bundle on $B$ such that $\H(L)$ is Ulrich for $(B, L)$. Another possibility is when $B = \PP^2$ and $\G= T_{\PP^2}$ (see \cite[Prop.~6.2]{lo}).
\end{remark}

We are now ready for our classification theorem.

\begin{proof}[Proof of Theorem \ref{main2}]

\hskip 3cm

If $(X, \O_X(1), \E)$ is as in (i)-(v), then it is clear by \cite[Prop.~2.1]{es} (or \cite[Thm.~2.3]{b2}) in case (i), by \cite[Lemma 4.1]{lo} in cases (ii) and (iii), by Remark \ref{spi} in case (iv) and by Theorem \ref{bei} in case (v), that $\E$ is a rank $r$ Ulrich vector bundle for $(X,H)$ with the assigned values of $c_1(\E)^s, s \ge 1$. Moreover $\E$ is not big by Remark \ref{seg} in cases (i)-(iii), by Remark \ref{spi} in case (iv) and by Theorem \ref{bei} in case (v).

Vice versa let $\E$ be a rank $r$ Ulrich vector bundle for $(X,H)$ such that $\E$ is not big. Let $d = \deg X$.

If $d=1$ then $(X, \O_X(1)) = (\PP^3, \O_{\PP^3}(1))$ and it follows by \cite[Prop.~2.1]{es} (or \cite[Thm.~2.3]{b2}) that $\E = \O_{\PP^3}^{\oplus r}$, so that we are in case (i).

If $d=2$ then $(X, \O_X(1)) = (Q, \O_Q(1))$ whith $Q \subset \PP^4$ is a smooth quadric, and we are in case (iv) by Remark \ref{spi}. 

Assume from now on that $d \ge 3$.

If $c_1(\E)^2=0$, it follows by \cite[Cor.~1]{ls} that $(X, \O_X(1), \E)$ is as in (ii).

In the rest of the proof we will thus assume that $c_1(\E)^2 \ne 0$. 

By \cite[Thm.~1]{lo} we find that $X$ is covered by lines. By \cite[Thm.~1.4]{lp} it follows, using our assumption, that $(X, \O_X(1))$ is one of the following:
\begin{itemize}
\item [(1)] a linear $\PP^2$-bundle over a smooth curve;
\item [(2)] a linear $\PP^1$-bundle over a smooth surface;
\item [(3)] a Del Pezzo threefold;
\item [(4)] a quadric fibration over a curve.
\end{itemize}
In case (1), Theorem \ref{bei} gives that $(X, \O_X(1), \E)$ is as in (iii) when $c_1(\E)^3 = 0$ and as in (v) when $c_1(\E)^3>0$. 

Now we can assume that $(X, \O_X(1))$ is not as in (1). In particular Lemma \ref{riga} implies that $c_1(\E)^3=0$. 

In case (2), Proposition \ref{surf} gives that $(X, \O_X(1), \E)$ is as in (ii). 

We can thus assume that $(X, \O_X(1))$ is also not as in (2). 

To conclude we will now show that cases (3) and (4) do not occur. 

In case (3) only Del Pezzo threefolds of degree $d = 3, 4, 5$ remain, and these are excluded by \cite[Lemma 3.2]{lo} since they have $\rho(X)=1$.

In case (4) observe first that all fibers of the quadric fibration are irreducible. This was proved in \cite[\S 1]{io}. We give an alternative proof. Assume that there is a reducible fiber, say $M_1\cup M_2 \subset \PP^N$ where $M_1$ and $M_2$ are planes meeting along a line $R$. Then for any $x \in R$ we have that 
$$\langle M_1, M_2 \rangle \subseteq T_xX$$ 
hence they are equal. But this violates Zak's theorem on tangencies \cite[Thm.~0]{za}, \cite[Thm.~3.4.17]{laz1}.

Now, if all fibers of $\Phi_{\E} : X \to \mathbb G(r-1,rd-1)$ are $1$-dimensional, it follows by \cite[Prop.~3.2.1]{bs}, as in the proof of Proposition \ref{surf}, that $\Phi_{\E} : X \to \Phi_{\E}(X)$ is a linear $\PP^1$-bundle over a smooth surface, a contradiction. Therefore, by \cite[Thm.~1]{ls}, the only other possibility is that there is a fiber $F = \PP^2$ of $\Phi_{\E}$. Since $\PP^2$ does not surject onto a curve, it follows that $F$ is contained in a fiber of the quadric fibration. But then that fiber is reducible,  a contradiction.
\end{proof}


\begin{thebibliography}{AHMPL}

\bibitem[AB]{ab} M.~Aprodu, V.~ Brînzănescu.
\textit{Beilinson type spectral sequences on scrolls}. 
Moduli spaces and vector bundles, 426-436, London Math. Soc. Lecture Note Ser., \textbf{359}, Cambridge Univ. Press, Cambridge, 2009.
 
\bibitem[AHMPL]{ahmpl} M.~Aprodu, S.~Huh, F.~Malaspina, J.~Pons-Llopis.
\textit{Ulrich bundles on smooth projective varieties of minimal degree}. 
Proc. Amer. Math. Soc. \textbf{147} (2019), no. 12, 5117-5129.

\bibitem[Ba]{ba} C.~B\u anic\u a.
\textit{Smooth reflexive sheaves}. 
Proceedings of the Colloquium on Complex Analysis and the Sixth Romanian-Finnish Seminar,
Rev. Roumaine Math. Pures Appl. \textbf{36} (1991), no. 9-10, 571-593.

\bibitem[Be1]{b1} A.~Beauville.
\textit{Determinantal hypersurfaces}. 
Dedicated to William Fulton on the occasion of his 60th birthday.
Michigan Math. J. \textbf{48} (2000), 39-64. 

\bibitem[Be2]{b2} A.~Beauville.
\textit{An introduction to Ulrich bundles}. 
Eur. J. Math. \textbf{4} (2018), no. 1, 26-36.

\bibitem[BF]{bf} G.~Bini, F.~Flamini.
\textit{Big vector bundles on surfaces and fourfolds}. 
Mediterr. J. Math.  \textbf{17} (2020), no. 1, Paper No. 17, 20 pp.

\bibitem[BGS]{bgs} R.~O.~Buchweitz, G.~M.~Greuel, F.~O.~Schreyer.
\textit{Cohen-Macaulay modules on hypersurface singularities. II}. 
Invent. Math. \textbf{88} (1987), no. 1, 165-182.

\bibitem[BS]{bs} M.~C.~Beltrametti, A.~J.~Sommese.
\textit{The adjunction theory of complex projective varieties}. 
De Gruyter Expositions in Mathematics, \textbf{16}. Walter de Gruyter \& Co., Berlin, 1995.



\bibitem[CKM]{ckm} E.~Coskun, R.~S.~Kulkarni, Y.~Mustopa
\textit{On representations of Clifford algebras of ternary cubic forms}. 
In: New trends in noncommutative algebra, 91-99, Contemp. Math., \textbf{562}, 
Amer. Math. Soc., Providence, RI, 2012. 

\bibitem[D]{de} O.~Debarre.
\textit{Higher-dimensional algebraic geometry}. 
Universitext. Springer-Verlag, New York, 2001. xiv+233 pp.

\bibitem[DPS]{dps} J.~P.~Demailly, T.~Peternell, M.~Schneider.
\textit{Compact complex manifolds with numerically effective tangent bundles}. 
Algebraic Geom. \textbf{3} (1994), no. 2, 295-345. 

\bibitem[E]{e} G.~Elencwajg.
\textit{Fibr\`es uniformes de rang \`elev\`e sur $\PP^2$}. 
Ann. Inst. Fourier (Grenoble) \textbf{31} (1981), no. 4, vii, 89-114.
 
\bibitem[ES]{es} D.~Eisenbud, F.-O.~Schreyer.
\textit{Resultants and Chow forms via exterior syzygies}. 
J. Amer. Math. Soc. \textbf{16} (2003), no. 3, 537-579.

\bibitem[EH]{eh} D.~Eisenbud, J.~Harris.
\textit{3264 and all that - a second course in algebraic geometry}.
Cambridge University Press, Cambridge, 2016. xiv+616 pp. 

\bibitem[Ha]{ha} R.~Hartshorne.
\textit{Algebraic geometry}. 
Graduate Texts in Mathematics, No. 52. Springer-Verlag, New York-Heidelberg, 1977.

\bibitem[I]{io} P.~Ionescu.
\textit{Generalized adjunction and applications}. 
Math. Proc. Cambridge Philos. Soc. \textbf{99} (1986), no. 3, 457-472.

\bibitem[J]{j} S.-Y.~Jow.
\textit{Iitaka dimensions of vector bundles}. 
Ann. Mat. Pura Appl. (4) \textbf{197} (2018), no. 5, 1631-1635.

\bibitem[La1]{laz1} R.~Lazarsfeld. 
\textit{Positivity in algebraic geometry. I. Classical setting: line bundles and linear series}.
Ergebnisse der Mathematik und ihrer Grenzgebiete. 3. Folge. A Series of Modern Surveys in Mathematics,  \textbf{48}. Springer-Verlag, Berlin, 2004.

\bibitem[La2]{laz2} R.~Lazarsfeld. 
\textit{Positivity in algebraic geometry. II. Positivity for vector bundles, and multiplier ideals}.
Ergebnisse der Mathematik und ihrer Grenzgebiete. 3. Folge. A Series of Modern Surveys in Mathematics, \textbf{49}. Springer-Verlag, Berlin, 2004.

\bibitem[Lo]{lo} A.~F.~Lopez. 
\textit{On the positivity of the first Chern class of an Ulrich vector bundle},
Preprint 2020, arXiv : 2008.07313.

\bibitem[LP]{lp} A.~Lanteri, M.~Palleschi.
\textit{Projective manifolds containing many rational curves}. 
Indiana Univ. Math. J. \textbf{36} (1987), no. 4, 857-865. 

\bibitem[LS]{ls} A.~F.~Lopez, J.~C.~Sierra.
\textit{A geometrical view of Ulrich vector bundles},
Preprint 2021.


\bibitem[MOS]{mos} R.~Mu\~{n}oz, G.~Occhetta, L.~E.~Sol\'a Conde.
\textit{On rank 2 vector bundles on Fano manifolds}. 
Kyoto J. Math. \textbf{54} (2014), no. 1, 167-197.

\bibitem[O]{ot} G.~Ottaviani.
\textit{Spinor bundles on quadrics}. 
Trans. Amer. Math. Soc. \textbf{307} (1988), no. 1, 301-316. 

 

\bibitem[Sa]{sa} E.~Sato.
\textit{Varieties which have two projective space bundle structures}. 
J. Math. Kyoto Univ. \textbf{25} (1985), no. 3, 445-457.

\bibitem[Sc]{sc} M.~Schneider.
\textit{Some remarks on vanishing theorems for holomorphic vector bundles}. 
Math. Z. \textbf{186} (1984), no. 1, 135-142.


\bibitem[U]{u} B.~Ulrich.
\textit{Gorenstein rings and modules with high numbers of generators}. 
 Math. Z. \textbf{188} (1984), no. 1, 23-32.

\bibitem[Z]{za} F.~L.~Zak.
\textit{Projections of algebraic varieties}. 
Mat. Sb. (N.S.) \textbf{116(158)} (1981), no. 4, 593-602, 608. English translation: Math. USSR-Sb. \textbf{44} (1981), no. 4, 535-544 (1983).


\end{thebibliography}
\end{document}